\documentclass[11pt]{conm-p-l}
\usepackage{float, amssymb, amsmath, anysize, colortbl, tikz}

\marginsize{2cm}{2cm}{1cm}{2.5cm}
% left % right % top % bottom

\usetikzlibrary{shapes}

\theoremstyle{plain}
\newtheorem{theorem}{Theorem}[section]
\newtheorem{lemma}[theorem]{Lemma}

\theoremstyle{remark}
\newtheorem{defn}[theorem]{Definition}

\newcommand\GF{\mathbb{GF}}
\newcommand\PG{\mathsf{PG}}
\newcommand\Q{\mathsf{Q}}
\renewcommand\le{\leqslant}

\newcommand{\N}{\mathsf{N}}
\newcommand{\T}{\mathsf{T}}

\title{Low dimensional models of the finite split Cayley hexagon}

\author{John Bamberg}
\address{ %
Centre for the Mathematics of Symmetry and Computation\\
School of Mathematics and Statistics\\
The University of Western Australia\\
35 Stirling Highway, Crawley, W.A. 6009, Australia.}
\email{John.Bamberg@uwa.edu.au}

\author{Nicola Durante}
\address{ %
Dipartimento di Matematica ed Applicazioni\\
Universit\`a di Napoli ``Federico II''\\
Via Cintia, 80126 Naples, Italy.
}
\email{ndurante@unina.it}

\keywords{Generalised hexagon, Hermitian surface}
\subjclass[2000]{Primary 05B25, 51E12, 51E20}

% 05B25 Finite geometries
% 51E12 Generalized quadrangles, generalized polygons
% 51E20 Combinatorial structures in finite projective spaces

\begin{document}

\begin{abstract}
We provide a model of the split Cayley hexagon arising from the Hermitian surface
$\mathsf{H}(3,q^2)$, thereby yielding a geometric construction of the Dickson group
$\mathsf{G}_2(q)$ starting with the unitary group $\mathsf{SU}_3(q)$.
\end{abstract}

\maketitle

\section{Introduction}\label{sect:intro}

A generalised polygon $\Gamma$ is a point-line incidence structure such that the incidence graph is
connected and bipartite with girth twice that of its diameter. If the valency of every vertex is at
least $3$, then we say that $\Gamma$ is \textit{thick}, and it turns out that the incidence graph is
then biregular\footnote{That is, there are two constants $k_1$ and $k_2$ such that the valency of
  each vertex in one bipartition is $k_1$, and the valency of each vertex in the other bipartition
  is $k_2$.}.  By a famous result of Feit and Higman \cite{FeitHigman}, a finite \textit{thick
  generalised polygon} is a complete bipartite graph, projective plane, generalised quadrangle,
generalised hexagon or generalised octagon.  There are many known classes of finite projective
planes and finite generalised quadrangles but presently there are only two known families, up to
isomorphism and duality, of finite generalised hexagons; the \textit{split Cayley hexagons} and the
\textit{twisted triality hexagons}.

The split Cayley hexagons $\mathcal{H}(q)$ are the natural geometries for Dickson's group
$\mathsf{G}_2(q)$, and they were introduced by Tits \cite{Tits59} as the set of points of the
parabolic quadric $\Q(6,q)$ and an orbit of lines of $\Q(6,q)$ under $\mathsf{G}_2(q)$.  If $q$ is
even, then the polar spaces $\mathsf{W}(5,q)$ and $\Q(6,q)$ are isomorphic geometries, and hence
$\mathcal{H}(q)$ can be embedded into a five-dimensional projective space.
Thas and Van Maldeghem \cite{ThasVM98} proved that if $\mathcal{H}$ is a finite thick generalised hexagon
embedded\footnote{We will not discuss the various meanings of ``embedding'' here, but
instead refer the interested reader to \cite{ThasVM96,ThasVM98}.} into the projective space $\PG(d,q)$, then $d\le 7$
and this embedding is equivalent to one of the standard models of the known generalised hexagons.
So in particular, it is impossible to embed the split Cayley hexagon $\mathcal{H}(q)$
into a three-dimensional projective space.
However, there is an elegant model of $\mathcal{H}(q)$ which begins with geometric structures lying
in $\PG(3,q)$, and it is equivalent to the model provided by Cameron and Kantor
\cite[Appendix]{CameronKantor79}:

\begin{theorem}[Cameron and Kantor (paraphrased) \cite{CameronKantor79}]\label{thm:constL3}\ \\
Let $(p,\sigma)$ be a point-plane anti-flag of $\PG(3,q)$ and let $\Omega$ be a set of $q
(q^2-1)(q^2+q+1)$ parabolic congruences\footnote{A \textit{pencil} of lines of $\PG(3,q)$ refers to
  the set of lines passing through a point, lying on a plane.  A set of $q^2+q+1$ lines concurrent
  with a common line $\ell$, no two of which meet in a point not on $\ell$, is called a
  \textit{parabolic congruence}, and the line $\ell$ is its \textit{axis}. The image of a parabolic congruence under the Klein
  correspondence yields a $3$-dimensional quadratic cone of $\Q^+(5,q)$, and vice-versa (see
  \cite[p. 30]{Hirschfeld85}), and so a parabolic congruence is a union of $q+1$ pencils sharing a
  line.}  each having axis not incident with $p$ or $\sigma$, but having a pencil of lines with one
line incident with $p$ and another incident with $\sigma$.  Suppose that for each pencil
$\mathcal{L}$ with vertex not in $\sigma$ and plane not incident with $p$, there are precisely $q+1$ elements
of $\Omega$ containing $\mathcal{L}$, whose union are the lines of some linear complex (i.e., the
lines of a symplectic geometry $\mathsf{W}(3,q)$).  Then the following incidence structure $\Gamma$
is isomorphic to the split Cayley hexagon $\mathcal{H}(q)$.
\begin{table}[H]
\begin{tabular}{ll}\hline
  \textsc{Points:} & (a) Lines of $\mathsf{PG}(3,q)$.\\
  &  (b) Pencils with a vertex not in $\sigma$ and plane not incident with $p$.\\
  \textsc{Lines:} & (i) Pencils with a vertex in $\sigma$ and plane through $p$.\\
  &  (ii) Elements of $\Omega$.\\
  \hline \\
\end{tabular}
\begin{minipage}{16cm}
An element $\ell$ of type (a) is incident with an element $\mathcal{P}$ of type (i) if $\ell$ is an
element of $\mathcal{P}$.  If $\mathcal{C}$ is an element of type (ii), then $\ell$ is incident with
$\mathcal{C}$ if $\ell$ is the axis of $\mathcal{C}$.  Elements of type (i) and (b) are never
incident. The containment relation defines incidence between elements of type (b) and (ii).
\end{minipage}

\end{table}
\end{theorem}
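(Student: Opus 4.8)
The plan is to realise $\Gamma$ as a full projective embedding of a generalised hexagon and then invoke the classification of such embeddings. Throughout I write $\mathsf{Kl}$ for the Klein correspondence, which identifies the lines of $\PG(3,q)$ with the points of the Klein quadric $\Q^+(5,q)\subseteq\PG(5,q)$, the pencils of $\PG(3,q)$ with the lines of $\Q^+(5,q)$, the parabolic congruences with the three-dimensional quadratic cones on $\Q^+(5,q)$ (as in the footnote), and the linear complexes of $\PG(3,q)$ with the parabolic quadrics $\Q(4,q)=\Pi\cap\Q^+(5,q)$ cut out by hyperplanes $\Pi$ of $\PG(5,q)$.

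First I would check that $\Gamma$ is a thick generalised hexagon of order $(q,q)$. The incidence numbers are a direct count: every point lies on exactly $q+1$ lines and every line on exactly $q+1$ points — here one uses the hypothesis that each type~(b) pencil lies in precisely $q+1$ members of $\Omega$, and the fact that a member of $\Omega$ is a union of $q+1$ pencils through its axis, exactly one of which (namely $(\,\mathrm{axis}\cap\sigma,\ \langle\mathrm{axis},p\rangle\,)$) is of type~(i), the other $q$ being of type~(b). Summing gives $|\Gamma_{\mathrm{pt}}|=|\Gamma_{\mathrm{ln}}|=1+q+\cdots+q^{5}$, the correct totals for order $(q,q)$. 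The substantive point is that the incidence graph has girth $12$ and diameter $6$: by a case analysis on the types of the two elements, no two points lie on two common lines (the "$q+1$ members of $\Omega$'' hypothesis kills the $(a)$--$(ii)$ and $(b)$--$(ii)$ digons, while two members of $\Omega$ sharing two pencils would agree on three pencils and hence coincide), there is no ordinary triangle, quadrilateral or pentagon (the "union is a linear complex'' hypothesis forbids the configurations through type~(ii) lines), and any two elements are joined by a path of length at most $6$ (a counting argument once the girth is known). This step is routine but is where most of the work lies.

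Next I would build the embedding. Adjoin to $\PG(5,q)$ one point to obtain $\PG(6,q)$ with a non-degenerate parabolic quadric $\Q(6,q)$ whose polar hyperplane $H$ of the new point satisfies $H\cap\Q(6,q)=\Q^+(5,q)$ (orthogonally sum the form of $\Q^+(5,q)$ with an anisotropic line; for $q$ even one works instead in $\mathsf{W}(5,q)$, the ambient space of $\mathcal H(q)$ there). Define $\phi$ on $\Gamma$ by: a type~(a) point (line of $\PG(3,q)$) $\mapsto$ its Klein image in $\Q^+(5,q)\subseteq\Q(6,q)$; a type~(i) line (pencil with vertex on $\sigma$, plane through $p$) $\mapsto$ its Klein image, a line of $\Q^+(5,q)\subseteq H$; a type~(b) point $\mathcal L\mapsto$ the pole in $\Q(6,q)$ of the hyperplane of $H$ spanning $\mathsf{Kl}\big(\bigcup\{\mathcal C\in\Omega:\mathcal C\supseteq\mathcal L\}\big)=\Q(4,q)$ — this is where the linear-complex hypothesis is indispensable, and where the antiflag $(p,\sigma)$ is used to choose, of the two candidate poles on that polar line, the one lying off $H$; and a type~(ii) line $\mathcal C\in\Omega\mapsto$ the line of $\Q(6,q)$ through $\phi(\mathrm{axis}(\mathcal C))$ and the $\phi$-images of the $q$ type~(b) pencils of $\mathcal C$. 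One then verifies that $\phi$ is well defined and injective, that the $q+1$ points assigned to a type~(ii) line really are collinear and lie on $\Q(6,q)$, and that $\phi$ preserves and reflects incidence; since $\phi$ maps the $\Gamma$-points bijectively onto $\Q(6,q)$ and the $\Gamma$-lines onto lines of $\Q(6,q)$, it is a full embedding of $\Gamma$ in $\PG(6,q)$. Establishing this collinearity and the $(b)$--$(ii)$ incidence is the main obstacle: it is exactly here that geometric content must be squeezed out of the otherwise opaque axioms on $\Omega$.

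Finally, having a finite thick generalised hexagon of order $(q,q)$ fully embedded in $\PG(6,q)$ with point set the parabolic quadric $\Q(6,q)$, I would apply the classification of embeddings of generalised hexagons (Thas--Van Maldeghem \cite{ThasVM98}, quoted above): the embedding is equivalent to a standard model, and the only generalised hexagon with such a model in dimension at most $7$ in which the \emph{points} (not the lines) lie on the quadric is the split Cayley hexagon $\mathcal H(q)$ — the twisted triality hexagon has order $(q^{3},q)\ne(q,q)$, and the standard projective model of $\mathcal H(q)^{D}$ is oriented the other way. Hence $\Gamma\cong\mathcal H(q)$; moreover the data are pinned down a posteriori — the $\PG(3,q)$ of the theorem is the one with Klein quadric $H\cap\Q(6,q)$, and $(p,\sigma)$ is the plane swept by the vertices, and the point common to the planes, of the type~(i) pencils — which is consistent with all point-plane antiflags of $\PG(3,q)$ being projectively equivalent, so no generality is lost. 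An alternative to the last step, avoiding \cite{ThasVM98}, is to recognise $\phi(\Gamma)$ directly by checking that its lines satisfy, after a change of coordinates, the standard linear conditions on the Plücker coordinates defining $\mathcal H(q)$ on $\Q(6,q)$; this is more computational but self-contained.
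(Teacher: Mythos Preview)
The paper does not supply its own proof of this theorem: it is quoted (and attributed) to Cameron and Kantor, and serves only as motivation for the unitary analogue in Theorem~\ref{thm:constU3}. What the paper \emph{does} contain is the expository table in Section~\ref{sect:parabolicquadric} (``The extended Klein representation''), which records exactly the dictionary you are using: lines of $\PG(3,q)$ $\leftrightarrow$ points of $\Q^+(5,q)$, type~(b) pencils $\leftrightarrow$ affine points of $\Q(6,q)\setminus\Q^+(5,q)$, type~(i) pencils $\leftrightarrow$ lines of $\Q^+(5,q)$ meeting both $p'$ and $\sigma'$, parabolic congruences $\leftrightarrow$ quadratic cones / affine lines of $\Q(6,q)$. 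So your overall strategy---transport $\Gamma$ to $\Q(6,q)$ via Klein and then invoke an embedding/characterisation theorem---is precisely the geometry the authors have in mind, even though they do not carry it out.

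One point in your map $\phi$ deserves care. For a type~(b) pencil $\mathcal{L}$ you propose to take ``the pole in $\Q(6,q)$'' of the $\Q(4,q)$ spanned by the associated linear complex. The perp (in $\PG(6,q)$) of that $4$-space is a \emph{line} through the pole $P_H$ of $H$, and it meets $\Q(6,q)$ in two affine points, not one; your remark ``the one lying off $H$'' does not disambiguate, since both lie off $H$. The honest resolution is not via the linear complex at all but via the Klein image of $\mathcal{L}$ itself: $\mathsf{Kl}(\mathcal{L})$ is a line of $\Q^+(5,q)$ skew to both $p'$ and $\sigma'$, and the totally singular planes of $\Q(6,q)$ on that line are exactly two, one through each of the two affine points on your perp line; the antiflag $(p,\sigma)$ (equivalently, the ordered pair $(p',\sigma')$) then singles out one of them. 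This is what the paper's table is encoding when it pairs type~(b) pencils with affine points rather than with linear complexes. Once $\phi$ is pinned down this way, the collinearity check for type~(ii) lines and the verification that $\phi$ is a bijection onto the points of $\Q(6,q)$ go through, and your final appeal to the embedding classification (or, more in the spirit of the present paper, to Theorem~\ref{thm:Q6embedding}) finishes the job.
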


The central result of this note is a unitary analogue of this model.

\begin{theorem}\label{thm:constU3}\ \\
Let $\mathcal{O}$ be a Hermitian curve of $\mathsf{H}(3,q^2)$ and let $\Omega$ be a set of Baer
subgenerators with a point in $\mathcal{O}$, such that every point of
$\mathsf{H}(3,q^2)\backslash\mathcal{O}$ is on $q+1$ elements of $\Omega$ spanning a Baer subplane.
Then the following incidence structure $\Gamma$ is a generalised hexagon of order
$(q,q)$. \begin{table}[H]
\begin{tabular}{ll}\hline
  \textsc{Points:} & (a) Lines of $\mathsf{H}(3,q^2)$.\\
  &  (b) Affine points of $\mathsf{H}(3,q^2)\backslash\mathcal{O}$.\\
  \textsc{Lines:} & (i) Points of $\mathcal{O}$.\\
  &  (ii) Elements of $\Omega$.\\
\textsc{Incidence:} & Inclusion or inherited incidence.\\
\hline
\end{tabular}
\end{table}
\noindent Moreover, $\Gamma$ is isomorphic to the split Cayley hexagon $\mathcal{H}(q)$.
\end{theorem}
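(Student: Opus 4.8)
The plan is to extract explicit combinatorial data from the two hypotheses on $\Omega$, then count to see that $\Gamma$ has the parameters of a generalised hexagon of order $(q,q)$, verify that its incidence graph has girth $12$, and finally identify $\Gamma$ with $\mathcal{H}(q)$. First I would unpack the hypotheses. A Hermitian curve of $\mathsf{H}(3,q^2)$ is the intersection with a non-tangent plane $\pi$, and $\pi$ contains no line of $\mathsf{H}(3,q^2)$, so every generator $\ell$ meets $\mathcal{O}$ in a single point $P_\ell$; thus $\mathcal{O}$ is an ovoid of the generalised quadrangle $\mathsf{H}(3,q^2)$ and every member of $\Omega$ lying in $\ell$ passes through $P_\ell$. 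The spanning hypothesis says that for an affine point $x$ the $q+1$ members of $\Omega$ through $x$ span a plane $\pi_x\cong\PG(2,q)$; since that span contains each of them, they are the $q+1$ lines of $\pi_x$ through $x$, so two members of $\Omega$ through a common affine point meet only in that point, and $\pi_x$ lies in the tangent plane $x^{\perp}$. Consequently two members of $\Omega$ lying in a common generator meet only in its point of $\mathcal{O}$ (a further common point would be affine, contradicting the previous sentence), so each generator carries at most $q$ members of $\Omega$. Counting the flags (affine point, member of $\Omega$) two ways gives $q\,|\Omega|=q^2(q^3+1)(q+1)$, hence $|\Omega|=q(q+1)(q^3+1)$, which is $q$ times the number of generators; so each generator carries \emph{exactly} $q$ members of $\Omega$, their affine parts partition it, and $B\mapsto\langle B\rangle$ is a bijection from the members of $\Omega$ through an affine point $x$ onto the generators through $x$.

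A routine count now gives that a point of type (a) lies on the one line $P_\ell$ of type (i) and on $q$ lines of type (ii); a point of type (b) lies on no line of type (i) and on $q+1$ of type (ii); a line of type (i) lies on the $q+1$ generators through that point of $\mathcal{O}$ and on no affine point; a line of type (ii) lies on its unique spanning generator and its $q$ affine points. Hence $\Gamma$ is biregular of valency $q+1$ on both sides, with $(q+1)(q^4+q^2+1)$ points and the same number of lines. Next I would show the incidence graph has girth at least $12$, i.e.\ that $\Gamma$ has no ordinary $k$-gon for $2\le k\le5$, by a case analysis on the types of the vertices of a putative short circuit. Digons, triangles, and the ``degenerate'' $4$- and $5$-gons in which the generators involved all coincide with a single generator are dispatched by elementary facts: distinct lines of $\PG(3,q^2)$ meet in at most a point; a Baer subline spans a unique line; in the generalised quadrangle $\mathsf{H}(3,q^2)$ three pairwise intersecting generators pass through a common point; $\mathcal{O}$ meets each generator once; and the partition property above. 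The genuine difficulty is the \emph{non-degenerate} ordinary $4$- and $5$-gons, i.e.\ a $4$- or $5$-circuit of pairwise distinct generators together with members of $\Omega$ on them: such circuits of generators do occur in the generalised quadrangle, so to rule out their lifts to $\Gamma$ one must use the Baer \emph{subplane} structure $\pi_x\subseteq x^{\perp}$ and the bijection $B\mapsto\langle B\rangle$ at each affine vertex to force a contradiction; I expect this to be the main obstacle of the first part. Once it is done, a standard counting argument in the incidence graph --- girth $\ge12$ makes the ball of radius $5$ about any vertex a tree, and comparing its size with the total number of vertices forces connectedness, diameter $6$, and girth exactly $12$ --- shows that $\Gamma$ is a generalised hexagon of order $(q,q)$.

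For the final assertion I would identify $\Gamma$ with a standard model of the split Cayley hexagon. The route I favour uses the classical point--line duality $\mathsf{H}(3,q^2)\leftrightarrow\Q^{-}(5,q)$ (equivalently, the isomorphism $\mathsf{PSU}_4(q)\cong\mathsf{P\Omega}_6^{-}(q)$): under it, generators of $\mathsf{H}(3,q^2)$ become points of $\Q^{-}(5,q)$, points become lines, the ovoid $\mathcal{O}$ becomes a spread, the affine points become the lines of $\Q^{-}(5,q)$ not in that spread, and $\Omega$ becomes a family of sub-pencils of lines; one then realises $\Q^{-}(5,q)$ as a non-tangent hyperplane section of $\Q(6,q)$ and checks that the translated points and lines of $\Gamma$ are precisely the points and lines of the split Cayley hexagon in its standard model on $\Q(6,q)$ --- or, equivalently, coordinatises everything and exhibits the bijection, or matches $\Gamma$ with the Cameron--Kantor model of Theorem~\ref{thm:constL3}. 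The obstacle here is twofold: the hypotheses pin down $\Omega$ only combinatorially, so one must first show that they force $\Omega$, up to the action of $\mathsf{P\Gamma U}_4(q)$, to be a single explicitly describable family --- reducing to one concrete choice --- and then recognise that family as the set of hexagon lines of $\mathcal{H}(q)$ not contained in the chosen hyperplane, which needs the detailed geometry of $\mathcal{H}(q)$ inside $\Q(6,q)$. An alternative, settling the non-degenerate $4$- and $5$-gons and the isomorphism in one stroke, is to build the isomorphism with $\mathcal{H}(q)$ directly from the Hermitian data and read off ``generalised hexagon of order $(q,q)$'' as a corollary; either way, this identification --- equivalently, the rigidity of $\Omega$ --- is where essentially all the work lies.
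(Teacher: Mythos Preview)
Your plan matches the paper's: establish the partition of each generator by $\Omega$, rule out $k$-gons for $k\le 5$, count, and for the ``moreover'' first prove that $\Omega$ is rigid (the paper shows any such $\Omega$ is an $\mathsf{SU}_3(q)$-orbit, by an explicit coordinate computation) and then transport to $\Q(6,q)$ via what the paper calls the Barlotti--Cofman--Segre correspondence --- which is exactly your duality $\mathsf{H}(3,q^2)\leftrightarrow\Q^-(5,q)$ followed by the hyperplane embedding into $\Q(6,q)$.

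The one idea you have correctly located but not supplied is the all-affine quadrangle $R,S,T,U$, and the paper's device there is polarity rather than the bijection $B\mapsto\langle B\rangle$. With $\mathfrak u$ the Hermitian polarity, $S,U\in R^{\mathfrak u}\cap T^{\mathfrak u}$ forces $R^{\mathfrak u}\cap T^{\mathfrak u}=SU$. The crucial observation is that your Baer subplane $\pi_R$ is \emph{fully contained} in $\mathsf{H}(3,q^2)$ (it is literally the union of the $q+1$ Baer subgenerators in $R^*$) and meets $\mathcal{O}$ in a Baer subline. Since $S,U\in\pi_R$, the secant line $SU$ meets $\pi_R$ --- and hence $\mathsf{H}(3,q^2)$ --- in a Baer subline of $\pi_R$; two lines of the $\PG(2,q)$ underlying $\pi_R$ meet, so $SU\cap\mathsf{H}(3,q^2)$ contains a point of $\mathcal{O}$. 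Symmetrically $RT\cap\mathsf{H}(3,q^2)$ contains a point of $\mathcal{O}$. But $SU$ and $RT$ are mutually polar under $\mathfrak u$, so every isotropic point of one is collinear in $\mathsf{H}(3,q^2)$ with every isotropic point of the other, producing two collinear points of the ovoid $\mathcal{O}$; contradiction. The pentagon case then collapses to this one, exactly as you anticipated.
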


The proof that $\Gamma$ is a generalised hexagon is presented in Section \ref{proofconstU3}.
Note that the lines of type (i) form a spread of $\mathcal{H}(q)$.
There exists a natural candidate for $\Omega$ which we explain in detail in Section \ref{sect:partition},
and it is essentially the only one (Theorem \ref{thm:omega}), and this implies the ultimate result
that $\Gamma$ is isomorphic to $\mathcal{H}(q)$.

By the deep results of Thas and Van Maldeghem \cite{ThasVM96,ThasVM98} and Cameron and Kantor
\cite{CameronKantor79}, if a set of points $\mathcal{P}$ and lines $\mathcal{L}$ of
$\mathsf{PG}(6,q)$ form a generalised hexagon, then it is isomorphic to the split Cayley
hexagon $\mathcal{H}(q)$ if $\mathcal{P}$ spans $\mathsf{PG}(6,q)$ and for any point
$x\in\mathcal{P}$, the points collinear to $x$ span a plane. A similar result was proved recently by
Thas and Van Maldeghem \cite{ThasVM08}, by foregoing the assumption that $\mathcal{P}$ and
$\mathcal{L}$ form a generalised hexagon, and instead instituting the following five axioms: (i) the
size of $\mathcal{L}$ is $(q^6-1)/(q-1)$, (ii) every point of $\PG(6,q)$ is incident with either $0$
or $q+1$ elements of $\mathcal{L}$, (iii) every plane of $\PG(6,q)$ is incident with $0$, $1$ or
$q+1$ elements of $\mathcal{L}$, (iv) every solid of $\PG(6,q)$ contains $0$, $1$, $q+1$ or $2q+1$
elements of $\mathcal{L}$, and (v) every hyperplane of $\PG(6, q)$ contains at most $q^3+3q^2+3q$
elements of $\mathcal{L}$.

One could instead characterise the split Cayley hexagon viewed as points and lines of the parabolic
quadric $\Q(6,q)$, and the best result we have to date follows from a result of Cuypers and
Steinbach \cite[Theorem 1.1]{CuypersSteinbach}:

\begin{theorem}[Cuypers and Steinbach \cite{CuypersSteinbach} (paraphrased)]\label{thm:Q6embedding}
Let $\mathcal{L}$ be a set of lines of $\Q(6,q)$ such that every point of $\Q(6,q)$ is incident with
$q+1$ lines of $\mathcal{L}$ spanning a plane, and such that the concurrency graph of $\mathcal{L}$
is connected.  Then the points of $\Q(6,q)$ together with $\mathcal{L}$ define a generalised hexagon isomorphic to
the split Cayley hexagon $\mathcal{H}(q)$.
\end{theorem}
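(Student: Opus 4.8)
The plan is to show directly that $\Gamma=(\mathcal{P},\mathcal{L})$, with $\mathcal{P}$ the points of $\Q(6,q)$, is a generalised hexagon of order $(q,q)$, and then to identify it with $\mathcal{H}(q)$ by the embedding recognition theorem quoted above. Since the members of $\mathcal{L}$ are genuine lines of $\PG(6,q)$, two of them meet in at most one point, so $\Gamma$ is a partial linear space; each line carries $q+1$ points and, by hypothesis, each point carries $q+1$ lines, so a double count gives $|\mathcal{P}|=|\mathcal{L}|=(q^6-1)/(q-1)=(q+1)(q^4+q^2+1)$, exactly the number of points of a generalised hexagon of order $(q,q)$. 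Connectivity of the concurrency graph makes the incidence graph connected. The decisive local gadget is this: the $q+1$ lines of $\mathcal{L}$ through a point $x$ span a plane and there are exactly $q+1$ lines through $x$ in a plane, so these lines fill a plane $\pi_x$; as they lie on $\Q(6,q)$, every point of $\pi_x$ is singular and $\pi_x$ is a totally singular plane. In particular the points collinear with $x$ are exactly the points of $\pi_x$. If $x\sim y$ then $\pi_x=\pi_y$ would force every line of the common plane to lie in $\mathcal{L}$, making that plane a closed $\PG(2,q)$ in the concurrency graph and leaving the remaining points of $\Q(6,q)$ on no line of $\mathcal{L}$; this contradicts the hypotheses, so $\pi_x\neq\pi_y$ and $\pi_x\cap\pi_y=\langle x,y\rangle$.

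Then the whole problem reduces to proving that $\Gamma$ has girth at least $12$, i.e.\ no ordinary triangles, quadrangles or pentagons. First I would rule out triangles: for a triangle $xyz$ the two sides at $x$ lie in $\pi_x$, so $y,z\in\pi_x$ and the side $\langle y,z\rangle$ lies in $\pi_x\cap\pi_y=\langle x,y\rangle$, which is absurd. For a quadrangle $x_1x_2x_3x_4$ the four ``vertex planes'' are $\pi_{x_i}=\langle x_{i-1},x_i,x_{i+1}\rangle$, and I would show the four vertices span a solid $S$ (they are not coplanar, else opposite vertex planes coincide and hence opposite vertices are collinear, contradicting the no-triangle step). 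The four $\pi_{x_i}$ are then the four distinct totally singular faces of a tetrahedron in $S$. Since $\Q(6,q)$ has Witt index $3$ it contains no totally singular solid, so $\Q(6,q)\cap S$ is a genuine quadric of $\PG(3,q)$; but a quadric of $\PG(3,q)$ that contains a plane is a union of at most two planes, and four distinct totally singular faces cannot fit, a contradiction.

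The main obstacle is the pentagon. Again each vertex gives a totally singular plane $\pi_{x_i}=\langle x_{i-1},x_i,x_{i+1}\rangle$, and I would first check that $x_1,\dots,x_4$ span a solid, so the five vertices span either that solid or a $\PG(4,q)$. In the solid case at least three distinct totally singular $\pi_{x_i}$ lie in a quadric of $\PG(3,q)$, contradicting the ``at most two planes'' fact. The genuinely delicate case is the $\PG(4,q)$ one, and here I would use the structure of totally singular planes of a quadric $Q_V$ of $\PG(4,q)$: a nondegenerate $\Q(4,q)$ has Witt index $2$ and so contains no plane at all, while for a degenerate $Q_V$ every totally singular plane must contain the radical of $Q_V$ (its image in the nondegenerate quotient would otherwise be a plane or line lying on a quadric of Witt index at most $2$, which is impossible), and $Q_V$ can contain no totally singular solid by the Witt index of $\Q(6,q)$. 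Thus all five planes $\pi_{x_i}$ share the radical $R$; but then each side $\langle x_i,x_{i+1}\rangle=\pi_{x_i}\cap\pi_{x_{i+1}}$ contains $R$, forcing the five distinct sides either all through the single point $R$ (whence consecutive sides, meeting in a vertex and in $R$, give $R=x_i$ for every $i$) or all equal to the line $R$ — both absurd. Controlling this intersection pattern of the five vertex planes inside a degenerate quadric of $\PG(4,q)$ is the part requiring the most care.

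Having established girth at least $12$, the matching vertex count and connectivity force $\Gamma$ to meet the Moore bound, so by the Feit--Higman theory \cite{FeitHigman} its incidence graph has diameter $6$ and girth $12$; that is, $\Gamma$ is a generalised hexagon of order $(q,q)$. Finally $\mathcal{P}$ is all of $\Q(6,q)$, which spans $\PG(6,q)$, and the points collinear with any $x$ fill the plane $\pi_x$ and hence span a plane; so the recognition theorem of Thas and Van Maldeghem together with Cameron and Kantor \cite{ThasVM96,ThasVM98,CameronKantor79} applies and yields $\Gamma\cong\mathcal{H}(q)$.
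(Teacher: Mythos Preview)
Your argument is essentially correct, but it follows a different route from the paper's, and the difference is precisely the point of the paper.

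For the generalised-hexagon part, both you and the paper (in its Theorem~\ref{thm:HqQuadric}) rule out small cycles using the vertex planes $\pi_x$. Your triangle and quadrangle arguments match the paper's. Your pentagon argument via the radical of the induced quadric on $\PG(4,q)$ is correct, but the paper's is slicker: having shown that $\langle R,S,T,U\rangle$ is a solid meeting $\Q(6,q)$ in exactly the two planes $S^*\cup T^*$, it simply notes that the fifth vertex $W$ is $\Gamma$-collinear with both $R$ and $U$, so $R,U\in\pi_W$ and the diagonal $RU$ is totally singular; but $RU$ lies in neither $S^*$ (as $U\notin S^*$) nor $T^*$, contradicting the description of the quadric on the solid. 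No case split on $\dim\langle x_1,\dots,x_5\rangle$ or on the radical is needed.

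The genuine divergence is in the identification with $\mathcal{H}(q)$. You invoke the Thas--Van~Maldeghem/Cameron--Kantor recognition theorem (points spanning $\PG(6,q)$, collinear points of each $x$ spanning a plane). That is a valid and quick finish. The paper, however, explicitly sets out to give an \emph{elementary} proof that avoids those deep embedding results: it fixes a $\Q^-(5,q)$-section, shows in Lemma~\ref{lemma:hermitianspread} that $\mathcal{L}$ induces a Hermitian spread there, passes through the Barlotti--Cofman--Segre correspondence to $\mathsf{H}(3,q^2)$, and then applies its own Theorem~\ref{thm:omega} to recognise the resulting set $\Omega$ of Baer subgenerators as an $\mathsf{SU}_3$-orbit, whence the hexagon is split Cayley. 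So your approach is shorter but leans on a heavy external theorem that the paper is deliberately circumventing; the paper's approach is longer but self-contained and, as a by-product, exhibits the $\mathsf{SU}_3(q)$-structure responsible for the embedding.
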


In Section \ref{sect:hexagon} we will give an elementary proof of Theorem \ref{thm:Q6embedding} by using Theorem \ref{thm:omega}.

\subsection*{Some remarks on notation:}
In this paper, the \textit{relative norm} and \textit{relative trace} maps will be defined for the quadratic extension
$\GF(q^2)$ over $\GF(q)$.
The relative norm $\N$ is the
multiplicative function which maps an element $x\in\GF(q^2)$ to the
product of its conjugates of $\GF(q^2)$ over $\GF(q)$. That is, $\N(x)=x^{q+1}$.
The relative trace is instead the sum of the conjugates, $\T(x):=x+x^q$.

%%%%%%%%%%%%%%%%%%%%%%%%%%%%%%%%%%%%%%
%
%    The 3-dimensional Hermitian surface and its Baer substructures
%
%%%%%%%%%%%%%%%%%%%%%%%%%%%%%%%%%%%%%%

\section{The 3-dimensional Hermitian surface and its Baer substructures}\label{sect:hermitian}

The two (\textit{classical}) generalised quadrangles of particular importance in this note are
$\mathsf{H}(3,q^2)$ and $\Q^-(5,q)$. First there is the incidence structure of all points and lines
of a non-singular Hermitian variety in $\PG(3,q^2)$, which forms the generalised quadrangle
$\mathsf{H}(3,q^2)$ of order $(q^2,q)$. Its point-line dual is isomorphic to the geometry of points
and lines of the elliptic quadric $\Q^-(5,q)$ in $\PG(5,q)$, which yields a generalised quadrangle
of order $(q,q^2)$ (see \cite[3.2.3]{FGQ}). To construct $\mathsf{H}(3,q^2)$ given a prime power
$q$, we take a non-degenerate Hermitian form such as
$$\langle X,Y\rangle = X_0Y_0^{q}+X_1Y_1^{q}+X_2Y_2^{q}+X_3Y_3^{q}$$ and the totally isotropic
subspaces of the ambient projective space, with respect to this form.  Most of the material
contained in this section can be found in Barwick and Ebert's book \cite{BarwickEbert} and
Hirschfeld's book \cite[Chapter 7]{Hirschfeld98}.

Every line of $\mathsf{PG}(3,q^2)$ is (i) a \textit{generator} (i.e., totally isotropic line) of
$\mathsf{H}(3,q^2)$, (ii) meets $\mathsf{H}(3,q^2)$ in one point (i.e., a tangent line), or (iii) meets $\mathsf{H}(3,q^2)$
in a Baer subline (also called a \textit{hyperbolic} line). A \textit{Baer subline} of the projective line $\PG(1, q^2 )$ is a subset of
$q+1$ points in $\PG(1, q^2 )$ which form a $\GF(q)$-linear subspace.  We may also speak of \textit{Baer subplanes} and \textit{Baer
  subgeometries} of $\PG(3,q^2)$ as sets of points giving rise to projective subgeometries
isomorphic to $\PG(2,q)$ and $\PG(3,q)$ respectively. A \textit{Baer subgenerator} of
$\mathsf{H}(3,q^2)$ is a Baer subline of a generator of $\mathsf{H}(3,q^2)$. We will often use the
fact that three collinear points determine a unique Baer subline (\cite[Theorem 2.6]{BarwickEbert}) and
a planar quadrangle determines a unique Baer subplane (\cite[Theorem 2.8]{BarwickEbert}). In particular,
if $b$ and $b'$ are two Baer sublines of $\PG(2,q^2)$ sharing a point, but not spanning the same line,
then there is a unique Baer subplane containing both $b$ and $b'$. We say that it is the \textit{Baer subplane
spanned by $b$ and $b'$}.

One class of important objects for us in this paper will be the degenerate Hermitian curves of rank
2. Suppose we have a fixed hyperplane, $\pi: X_3=0$ say, meeting $\mathsf{H}(3,q^2)$ in a Hermitian
curve $\mathcal{O}$.  Let $\ell$ be a generator of $\mathsf{H}(3,q^2)$. Then the polar planes of the
points on $\ell$ meet $\pi$ in the $q^2+1$ lines through $L:=\ell\cap\mathcal{O}$. Now suppose we have a
Baer subgenerator $b$ contained in $\ell$, and containing the point $L$.  Then the polar planes of
the points of $b$ meet $\pi$ in $q+1$ lines through the point $L$ giving a \textit{dual Baer subline} of
$\pi$ with \textit{vertex} $L$. Moreover, the points lying on this dual Baer subline define a
variety with Gram matrix $U$; a Hermitian matrix of rank $2$. So they correspond to solutions of
$XU(X^q)^T=0$ where $U$ satisfies $U^q=U^T$. For example, if we consider a point $P$ in $\pi$,
say $(1,\omega,0,0)$ where $\N(\omega)=-1$, and two points $A:(a_0,a_0\omega,a_2,1)$,
$B:(b_0,b_0\omega,b_2,1)$
spanning a line with $P$, then $P, A, B$ determine a Baer subline.
In fact, if we suppose $B=P+\alpha A$ for some $\alpha\in\GF(q^2)^*$, then this
Baer subline is $\{A\}\cup \{\langle p+t\cdot\alpha a\rangle \mid t\in\GF(q)\}$ where $A=\langle a\rangle$
and $P=\langle p\rangle$.

Let $\mathfrak{u}$ be the polarity defining $\mathsf{H}(3,q^2)$.
Since $P$ is precisely the nullspace of $U$, and the tangent line $P^{\mathfrak{u}}\cap \pi$ is contained in
the dual Baer subline, it is not difficult to calculate that $U$ can be written explicitly as

$$
U:
    \begin{pmatrix}
     -\delta\omega^q &   \delta  &           -\gamma\omega\\
    \delta^q       &    \delta^q\omega    &  \gamma\\
      -\gamma^q\omega^q &   \gamma^q     &       0\\
    \end{pmatrix},\quad \delta\omega^q=\delta^q\omega.
$$

If we also suppose that the points of $A^{\mathfrak{u}}\cap \pi$ and $B^{\mathfrak{u}}\cap \pi$ are contained
in the dual Baer subline defined by $U$, then
we can solve for $\alpha$ and $\gamma$ (but the expressions might be ugly!). Here we explore a simple example
where $A:(0,0,1,\omega)$. Now $A^\mathfrak{u}\cap\pi$ has points of the form
$(r,s,0,0)$, $\N(r)+\N(s)=0$. So if $(r,s,0,0)$ also satisfies $(r,s,0)U(r^q,s^q,0)^T=0$, then
$$(r,s,0)U(r^q,s^q,0)^T=\T(rs^q\delta)+2\N(s)\delta^q\omega.$$

So $\T(rs^q\delta)+2\N(s)\delta^q\omega=0$ for every $(r,s,0,0)$ satisfying $\N(r)+\N(s)=0$.
In particular, $\delta$ is forced to be zero. Therefore, we can write
$$
U:
    \begin{pmatrix}
     0 &   0  &           -\gamma\omega\\
    0       &    0   &  \gamma\\
      -\gamma^q\omega^q &   \gamma^q     &       0\\
    \end{pmatrix}.
$$

\subsection{Proof of the first part of Theorem \ref{thm:constU3}}\label{proofconstU3}

Here we prove that the incidence structure $\Gamma$ of Theorem \ref{thm:constU3} is a generalised
hexagon.  Our approach is to use a definition of a generalised hexagon which is equivalent to the
one stated in the introduction: (i) it contains no ordinary $k$-gon for $k\in\{2,3,4,5\}$, (ii) any
pair of elements is contained in an ordinary hexagon, and (iii) there exists an ordinary heptagon
(see \cite[\S 1.3.1]{HvM}).  A thick generalised polygon has \textit{order} $(s,t)$ if every line has $s+1$
points and every point is incident with $t+1$ lines.  A counting argument shows that if we know that
the number of points and lines of a generalised hexagon are $(s+1)(1+st+s^2t^2)$ and
$(t+1)(1+st+s^2t^2)$, then the conditions (ii) and (iii) automatically follow from the first
condition.

\begin{proof}
First we show that $\Omega$ induces a point-partition of each generator (minus its point in the
Hermitian curve $\mathcal{O}$). Let $\ell$ be a generator of $\mathsf{H}(3,q^2)$ and let $P$ be a point of
$\ell\backslash\mathcal{O}$. For a point $X$, we will let $X^*$ be the $q+1$ elements of $\Omega$
which lie on $X$.
Consider the $q+1$ elements $P^*$ of $\Omega$ on $P$. Since $P^*$
covers the points of a Baer subplane, it follows that there is a unique element of $\Omega$
contained in $\ell$ and containing $P$. Therefore $\Omega$ induces a point-partition of each
generator minus its point in the Hermitian curve $\mathcal{O}$. It follows immediately that $\Gamma$ is a partial
linear space (i.e., every two points lie on at most one line).

Since $\mathsf{H}(3,q^2)$ is a generalised quadrangle, $\Gamma$ has no triangles. So suppose now
that we have a quadrangle $R$, $S$, $T$, $U$ of $\Gamma$. Then at least three of these points
are necessarily affine points. For example, if two of these points were of type (a), two points of type (b),
and with one line of type (i) and three of type (ii) making up the quadrangle,
the three lines of type (ii) would yield a triangle of generators. So this case is clearly impossible.
At least three points, $S$, $T$, $U$
say, are necessarily affine points and the lines of the quadrangle are elements of
$\Omega$. Moreover, $R$ is also an affine point, since if $R$ were a generator then $S$ and $U$
would lie on $R$ and $ST$, $TU$, $SU$ would then be a triangle in $\mathsf{H}(3,q^2)$; a
contradiction. So all the four points $R$, $S$, $T$, $U$ of a quadrangle must be affine.

Recall that $\mathfrak{u}$ is the polarity defining $\mathsf{H}(3,q^2)$. Note that
$R^\mathfrak{u} \cap T^\mathfrak{u}$ is equal to $SU$ and that $SU\cap
\mathsf{H}(3,q^2)$ is a Baer subline with a point on $\mathcal{O}$.  Indeed $R^*$
spans a Baer subplane fully contained in $\mathsf{H}(3,q^2)$ and it meets $\mathcal{O}$
 in a Baer subline and since $R^\mathfrak{u}\cap T^\mathfrak{u}\cap
\mathsf{H}(3,q^2)$ is a Baer subline contained in $R^*$ then $SU\cap \mathsf{H}(3,q^2)$
has a point in $\mathcal{O}$.
Likewise $S^\mathfrak{u}\cap U^\mathfrak{u}$ equal to $RT$ and $RT\cap
\mathsf{H}(3,q^2)$ is a Baer subline with a point in $\mathcal{O}$.  So $SU$ and
$RT$ are polar to each other under $\mathfrak{u}$, but then each point
of $\mathsf{H}(3,q^2)$ on $SU$ is collinear with each point of $\mathsf{H}(3,q^2)$ on
$RT$, while the points of $\mathcal{O}$ are pairwise non-collinear, a
contradiction. Hence $\Gamma$ has no quadrangles.

Suppose we have a pentagon $R$, $S$, $T$, $U$, $W$ of $\Gamma$.
Now points of type (b), which are affine points, are collinear in $\Gamma$ if they are
incident with a common element of $\Omega$. Since each element of $\Omega$ spans a generator, points
of type (b) are also collinear in $\mathsf{H}(3,q^2)$. So since $\mathsf{H}(3,q^2)$ is a generalised
quadrangle, we see immediately that each point of our pentagon is an affine point.
Suppose, by way of contradiction, that
our pentagon has a point of type (a), that is, a generator $\ell$ of $\mathsf{H}(3,q^2)$.
Then we would have four generators of $\mathsf{H}(3,q^2)$ forming a quadrangle and we
obtain a similar ``forbidden'' quadrangle of affine points (i.e., $RSTU$) from the above argument.
So there are no
pentagons in $\Gamma$.

A trivial counting argument shows that $\mathcal{L}$ has size $(q^6-1)/(q-1)$, which is equal to the
sum of the number of affine points and the number of generators of $\mathsf{H}(3,q^2)$, and so it
follows that $\Gamma$ is a generalised hexagon (of order $(q,q)$).  \end{proof}

%%%%%%%%%%%%%%%%%%%%%%%%%%%%
%
%    A partition of the Baer subgenerators
%
%%%%%%%%%%%%%%%%%%%%%%%%%%%%

\subsection{Exhibiting a suitable set of Baer subgenerators}\label{sect:partition}

In this section, we describe a natural candidate for a set $\Omega$ of Baer subgenerators satisfying
the hypotheses of Theorem \ref{thm:constU3}.  Consider the stabiliser $G_\mathcal{O}$ in
$\mathsf{PGU}_4(q)$ of the Hermitian curve $\mathcal{O}=\pi \cap \mathsf{H}(3,q^2)$, where $\pi$ consists of the
elements whose last coordinate is zero. Then the elements of $G_\mathcal{O}$ can be thought of
(projectively) as matrices $M_A$ of the form
$$M_A:=\left(\begin{smallmatrix}
&&&0\\
&A&&0\\
&&&0\\
0&0&0&1
\end{smallmatrix}\right), \quad A\in\mathsf{GU}_3(q).$$

\begin{lemma}\label{lemma:transBaer}
The group $G_{\mathcal{O}}$ acts transitively on the set of Baer subgenerators which
have a point in $\mathcal{O}$.
\end{lemma}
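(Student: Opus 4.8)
The plan is to prove transitivity by a counting-and-stabiliser argument, or alternatively (and more cleanly) by exhibiting enough explicit elements of $G_\mathcal{O}$ to move one fixed Baer subgenerator to an arbitrary one. First I would observe that a Baer subgenerator $b$ with a point $L$ in $\mathcal{O}$ is determined by the data: (1) the point $L\in\mathcal{O}$; (2) the generator $\ell$ of $\mathsf{H}(3,q^2)$ through $L$ containing $b$; and (3) the Baer subline structure on $\ell$, i.e.\ a choice of $q+1$-point $\GF(q)$-subline of $\ell$ through $L$. So the strategy is to show $G_\mathcal{O}$ acts transitively on each layer of this data, successively: transitively on points $L$ of $\mathcal{O}$; then, fixing $L$, transitively on the generators through $L$ other than $L^\mathfrak{u}\cap\pi$ (the ones not in $\pi$); then, fixing $L$ and $\ell$, transitively on Baer sublines of $\ell$ through $L$.

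For the first layer, $G_\mathcal{O}\cong \mathsf{GU}_3(q)$ acts on $\mathcal{O}$ as $\mathsf{GU}_3(q)$ acts on the Hermitian curve $\mathsf{H}(2,q^2)$, and this action is $2$-transitive on the $q^3+1$ points of $\mathcal{O}$ — in particular transitive — so I may fix $L$. For the second layer, I pass to the stabiliser $(G_\mathcal{O})_L$; the generators of $\mathsf{H}(3,q^2)$ through $L$ are the $q^2+1$ lines through $L$ in the polar plane $L^\mathfrak{u}$, one of which, $L^\mathfrak{u}\cap\pi$, lies in $\pi$ (and carries no Baer subgenerator with a second point off $\mathcal{O}$ of the relevant type — or rather is the tangent direction), and I need transitivity on the remaining $q^2$ generators. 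Here I would either quote that the stabiliser of a point of $\mathsf{H}(3,q^2)$ in $\mathsf{PGU}_4(q)$ is transitive on the generators through it (a standard generalised-quadrangle fact, cf.\ \cite{FGQ}) and then check that intersecting with $G_\mathcal{O}$ still leaves a subgroup transitive on the $q^2$ generators not in $\pi$; this can be seen from the Sylow $p$-subgroup (the "root group" fixing $L$ and acting on the affine generators through $L$) lying inside $G_\mathcal{O}$ when $L\in\mathcal{O}$. For the third layer, with $L$ and $\ell$ now fixed, $\ell\cong\PG(1,q^2)$ and $L$ is a fixed point of it; the Baer sublines of $\ell$ through $L$ are the orbits of the $\GF(q)$-subline through $L$ under the group $\mathrm{P\Gamma L}$-type stabiliser, and $\mathsf{PGL}_2(q^2)_L$ acts transitively on them (the pointwise stabiliser of $L$ acts as $\mathrm{AGL}_1(q^2)$ on the remaining $q^2$ points, which is transitive on the $(q^2-1)/(q-1)$ Baer sublines through $L$ since $\GF(q)^*\le\GF(q^2)^*$ acts on the nonzero scalars). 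I then need the elements realising this to be available inside $(G_\mathcal{O})_{L,\ell}$, which they are: they come from the stabiliser in $\mathsf{GU}_4(q)$ of the flag $(L,\ell)$, and fixing $\mathcal{O}$ is automatic once $L\in\mathcal{O}$ and the action on $\ell$ is by $\GF(q^2)$-semilinear maps preserving the form.

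The main obstacle I anticipate is the bookkeeping in the second and third layers — namely checking that after cutting down to $G_\mathcal{O}$ (which is forced to act trivially on the last coordinate), one still has \emph{enough} elements to be transitive on generators through $L$ and on Baer sublines of a fixed generator. The cleanest way to dispatch this is the explicit-matrix route promised by the surrounding text: write $L=(0,0,1,0)$, take $\ell$ to be the generator spanned by $L$ and a fixed affine point of $\mathsf{H}(3,q^2)$, write down a one-parameter family of unipotent matrices $M_A$ with $A\in\mathsf{GU}_3(q)$ fixing $L$ and sweeping out the generators through $L$, and a torus element (scalar on the relevant coordinates) sweeping out the Baer sublines; then transitivity is immediate by composition with the $2$-transitive action on $\mathcal{O}$. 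Finally I would assemble: given arbitrary Baer subgenerators $b,b'$ with points $L,L'\in\mathcal{O}$, use layer one to send $L\mapsto L'$, layer two to send the generator of $b$ (now through $L'$) to that of $b'$, and layer three to send the Baer subline to the correct one; the composite element of $G_\mathcal{O}$ sends $b$ to $b'$, proving transitivity. A count cross-check — $|G_\mathcal{O}|=|\mathsf{GU}_3(q)|=q^3(q^3+1)(q^2-1)$ and the number of Baer subgenerators with a point in $\mathcal{O}$ is $(q^3+1)\cdot q^2\cdot (q^2-1)/(q-1)=(q^3+1)q^2(q+1)$, which divides $|G_\mathcal{O}|$ — confirms the orbit-stabiliser arithmetic is consistent.
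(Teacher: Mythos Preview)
Your three-layer strategy---transitivity on $\mathcal{O}$, then on generators through a fixed $L\in\mathcal{O}$, then on Baer sublines of a fixed generator through $L$---is sound and is essentially the approach the paper takes. However, several of your geometric facts are wrong. The Hermitian surface $\mathsf{H}(3,q^2)$ is a generalised quadrangle of order $(q^2,q)$, so there are $q+1$ generators through any point $L$, not $q^2+1$. Moreover, \emph{none} of these lies in $\pi$: the line $L^{\mathfrak{u}}\cap\pi$ is the tangent line to the Hermitian curve $\mathcal{O}$ at $L$ and meets $\mathsf{H}(3,q^2)$ only in $L$, so it is not a generator at all. Your layer-three count is also off: the Baer sublines of $\ell\cong\PG(1,q^2)$ through a fixed point number $q(q+1)$, not $(q^2-1)/(q-1)$; you have accounted for the $\GF(q^2)^*/\GF(q)^*$ scaling but forgotten the $q$ translates. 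Consequently the total number of Baer subgenerators with a point in $\mathcal{O}$ is $(q^3+1)(q+1)\cdot q(q+1)=q(q+1)^2(q^3+1)$, which agrees with the paper's later fact that $\mathsf{SU}_3$ has $q+1$ orbits each of size $q(q+1)(q^3+1)$; your figure $(q^3+1)q^2(q+1)$ does not.

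None of these errors breaks the logical skeleton, but they do mean your layer-two argument as written (Sylow $p$-subgroup sweeping out ``the remaining $q^2$ generators'') is aimed at the wrong target. The paper handles layers two and three more cleanly and without explicit matrices: it observes that the stabiliser $J$ of a generator $\ell$ in the full group $\mathsf{PGU}_4(q)$ induces $\mathsf{PGL}_2(q^2)$ acting $3$-transitively on $\ell$, and that $J\cap G_\mathcal{O}$ induces precisely the stabiliser of the point $\ell\cap\mathcal{O}$ in that action---which is already transitive on Baer sublines of $\ell$ through that point. Transitivity on $\mathcal{O}$ then finishes the argument.
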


\begin{proof}
Inside the group $\mathsf{PGU}_4(q)$, the stabiliser $J$ of a generator $\ell$ induces a $\mathsf{PGL}_2(q^2)$ acting $3$-transitively
on the points of $\ell$. So the stabiliser in $J$ of a point $P$ of $\ell$ acts transitively on the Baer sublines within $\ell$ which contain $P$.
Now $J$ meets $G_\mathcal{O}$ in the stabiliser of a point of $\ell$, and so $G_{\mathcal{O},\ell}$ acts transitively on Baer subgenerators
contained in $\ell$. Since $G_\mathcal{O}$ acts transitively on $\mathcal{O}$, the result follows.
\end{proof}

The key to this construction is the action of a particular subgroup of $G_{\mathcal{O}}$. We will
see later that this group naturally corresponds to the stabiliser in $\mathsf{G}_2(q)$ of a
non-degenerate hyperplane $\Q^-(5,q)$ of $\Q(6,q)$.

\begin{defn}[$\mathsf{SU}_3$]
Let $\mathsf{SU}_3$ be the group of collineations of $\mathsf{H}(3,q^2)$ obtained from the
matrices $M_A$ where $A\in\mathsf{SU}_3(q)$.
\end{defn}

In short, the orbits of $\mathsf{SU}_3$ on Baer subgenerators with a point in $\mathcal{O}$, each
form a suitable candidate for $\Omega$, as we will see.

\begin{lemma}\label{lem:stabbaer}
Let $\mathcal{O}=\pi\cap \mathsf{H}(3,q^2)$, where $\pi$ is the hyperplane $X_3=0$ of $\PG(3,q^2)$,
and let $G_\mathcal{O}$ be the stabiliser of $\mathcal{O}$ in $\mathsf{PGU}_4(q)$. Let $b$ be a Baer
subgenerator of $\mathsf{H}(3,q^2)$ with a point in $\mathcal{O}$.  Then the stabiliser of $b$ in
$G_{\mathcal{O}}$ is contained in $\mathsf{SU}_3$.
\end{lemma}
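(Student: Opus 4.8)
The plan is to use Lemma~\ref{lemma:transBaer} to reduce to a single explicit Baer subgenerator, and then to compute directly the determinant of an arbitrary stabilising matrix. First note that $\mathsf{SU}_3\trianglelefteq G_\mathcal{O}$: since $M_BM_AM_B^{-1}=M_{BAB^{-1}}$ and $\mathsf{SU}_3(q)\trianglelefteq\mathsf{GU}_3(q)$, conjugation preserves $\mathsf{SU}_3$; and $M_A\in\mathsf{SU}_3$ precisely when $\det A=1$. Since for $b'=b^{g}$ with $g\in G_\mathcal{O}$ one has $(G_\mathcal{O})_{b'}=g^{-1}(G_\mathcal{O})_b\,g$, the transitivity of $G_\mathcal{O}$ on Baer subgenerators with a point in $\mathcal{O}$ (Lemma~\ref{lemma:transBaer}) reduces the problem to showing $\det A=1$ for every $M_A\in(G_\mathcal{O})_b$ for one conveniently chosen $b$.

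For that $b$ I would fix $\omega\in\GF(q^2)$ with $\N(\omega)=-1$, set $e=(1,\omega,0,0)$ and $f=(0,0,1,\omega)$, so that $\ell:=\langle e,f\rangle$ is a generator with $\ell\cap\mathcal{O}=L:=\langle e\rangle$, and take $b:=\{\langle e+tf\rangle:t\in\GF(q)\}\cup\{\langle f\rangle\}$. Let $M_A$ (with $A\in\mathsf{GU}_3(q)$ acting on the first three coordinates and fixing the fourth) stabilise $b$. Then $M_A$ fixes $L$, say $Ae'=\kappa e'$ with $e'=(1,\omega,0)^{T}$ and $\kappa\in\GF(q^2)^{*}$, and it stabilises $\ell=\langle b\rangle$. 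Because $M_A$ fixes the fourth coordinate, the last coordinate of the vector $M_A(f)$ is again $\omega$, which forces $M_A(f)=a_{13}e+f$ for some scalar $a_{13}$; equivalently the third column of $A$ is $c_3=a_{13}e'+(0,0,1)^{T}$. In the basis $(e,f)$ of $\ell$ the collineation $M_A$ therefore acts by $\left(\begin{smallmatrix}\kappa&a_{13}\\0&1\end{smallmatrix}\right)$, and since it preserves the Baer subline $b\cong\PG(1,q)$ we must have $\kappa,a_{13}\in\GF(q)$ (and $\kappa\neq0$).

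Now I compute $\det A$. From $Ae'=\kappa e'$ the first column of $A$ is $c_1=\kappa e'-\omega c_2$, where $c_2$ is the second column; by multilinearity $\det A=\kappa\det[\,e'\mid c_2\mid c_3\,]=\kappa\det[\,e'\mid c_2\mid(0,0,1)^{T}\,]$, since the $a_{13}e'$ summand of $c_3$ contributes nothing, and expanding this last $3\times3$ determinant along its third column gives $\det A=\kappa\,(c_{2,2}-\omega c_{2,1})$. On the other hand the columns of $A$ are orthonormal for the Hermitian form $h(u,v)=\sum_k\overline{u_k}\,v_k$, so $h(c_1,c_2)=0$ and $h(c_2,c_2)=1$; combining these with $h(e',e')=1+\N(\omega)=0$ and $\kappa\in\GF(q)$ yields $h(e',c_2)=c_{2,1}+\omega^{q}c_{2,2}=\omega^{q}/\kappa$, which rearranges (using $\omega^{q+1}=\N(\omega)=-1$) to $c_{2,2}-\omega c_{2,1}=1/\kappa$. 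Hence $\det A=\kappa\cdot(1/\kappa)=1$, so $M_A\in\mathsf{SU}_3$, as required. (As a sanity check one can verify that the constraints above together with $A\bar A^{T}=I$ cut out exactly $q^2(q-1)$ matrices, matching $|G_\mathcal{O}|$ divided by the number $q(q+1)^2(q^3+1)$ of Baer subgenerators with a point in $\mathcal{O}$; but this is not needed for the implication we want.)

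The step I expect to require the most care is the translation in the second paragraph: turning ``$M_A$ stabilises $b$'' faithfully into matrix conditions while keeping the conjugate-linearity and left/right conventions straight, and isolating precisely the relations $Ae'=\kappa e'$ with $\kappa\in\GF(q)$ and $c_3=a_{13}e'+(0,0,1)^{T}$ that the determinant computation actually uses. A possible alternative for the final step is to observe that $\det$ maps $(G_\mathcal{O})_b$ onto a subgroup of the $(q+1)$-st roots of unity whose order divides $\gcd(q+1,|(G_\mathcal{O})_b|)=\gcd(q+1,q^2(q-1))$, a divisor of $2$; this disposes of the case $q$ even immediately, but for $q$ odd one still has to exclude determinant $-1$, so the uniform direct computation seems preferable.
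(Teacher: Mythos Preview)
Your argument is correct. The reduction via Lemma~\ref{lemma:transBaer} and normality of $\mathsf{SU}_3$ is sound, the matrix conditions you extract from ``$M_A$ stabilises $b$'' are right (in particular $\kappa,a_{13}\in\GF(q)$), and the determinant computation goes through: with $c_1=\kappa e'-\omega c_2$ and $c_3=a_{13}e'+(0,0,1)^T$ one gets $\det A=\kappa(c_{2,2}-\omega c_{2,1})$, and then $h(c_1,c_2)=0$ together with $h(c_2,c_2)=1$ and $\kappa\in\GF(q)$ gives $c_{2,1}+\omega^q c_{2,2}=\omega^q/\kappa$, whence (using $\omega^{q+1}=-1$) $c_{2,2}-\omega c_{2,1}=1/\kappa$ and $\det A=1$. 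The mention of $h(e',e')=0$ is not actually needed for this chain, but it does no harm.

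Your route differs from the paper's in a genuine way. The paper encodes $b$ by the pair $[U_0,\ell_0]$ (the rank-$2$ Hermitian matrix of the associated dual Baer subline together with the generator), writes down the general shape of $A$ fixing $\ell_0$, imposes $U_0A=kAU_0$, and thereby obtains a fully explicit three-parameter description of the stabiliser; only then is $\det A$ evaluated, and it collapses to $1$ because of the factor $\N(\omega)+1=0$. By contrast you never parametrise the stabiliser: you work with the Baer subline in its native $\GF(q)$-coordinates on $\ell$, pull out only the two relations $Ae'=\kappa e'$ (with $\kappa\in\GF(q)$) and $c_3=a_{13}e'+(0,0,1)^T$, and then let multilinearity of $\det$ together with column-orthonormality of $A$ do the work. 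What the paper's approach buys is the explicit description of $(G_\mathcal{O})_b$ (useful later and for the order count you mention as a sanity check); what yours buys is a shorter, coordinate-light computation of $\det A=1$ that isolates exactly why the determinant is forced.
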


\begin{proof}
Recall from the beginning of Section \ref{sect:hermitian} that given a Baer subgenerator $b$ of $\mathsf{H}(3,q^2)$ with a point
$B$ in $\mathcal{O}$, there is a dual Baer subline of $\pi$ with vertex $B$. So there is a set
of $3\times 3$ Hermitian matrices $U$ of rank $2$, which are equivalent up to scalar multiplication
in $\GF(q^2)^*$. Now $G_{\mathcal{O}}$ induces an action on the pairs $[U,\ell]$, where $U$ is a
Hermitian matrix of rank $2$ and $\ell$ is a generator containing the nullspace of $U$, which we can
write out explicitly by
$$[U,\ell]^{M_A}=[A^{-1}UA,\ell^{M_A}].$$

Let $\omega$ be an element of $\GF(q^2)$ satisfying $\N(\omega)=-1$, and let $U_0$ and
$\ell_0$ be
$$U_0:=\left(\begin{smallmatrix}
0&0&-\omega\\
0&0&1\\
-\omega^q&1&0\\
\end{smallmatrix}\right), \quad
\ell_0 := \langle (1,\omega,0,0), (0,0,1,\omega)\rangle.$$ Since $G_{\mathcal{O}}$ acts transitively
on Baer subgenerators with a point in $\mathcal{O}$ (Lemma \ref{lemma:transBaer}), we need only calculate the stabiliser of $[U_0,\ell_0]$.  Now let $M_A$ be an
element of $G_ {\mathcal{O}}$ fixing $[U_0,\ell_0]$.  Since $M_A$ fixes $\ell_0$, we can see by
direct calculation that $A$ is of the form
$$\left(\begin{smallmatrix}
a&b&-f\omega\\
d&e&f\\
g&g\omega&1\\
\end{smallmatrix}\right),$$
with $(a+d\omega)\omega =b+e\omega$.\\
Now we see what it means for $A$ to centralise $U_0$ up to a scalar $k$, that is,
$U_0A=kAU_0$. Hence
$$
\left(\begin{smallmatrix}
-g \omega& -g \omega^2& -\omega\\
g& g \omega& 1\\
 d - a \omega^q& e - b \omega^q&   0
 \end{smallmatrix}\right)
  =
k\left(\begin{smallmatrix}
-f& -f \omega& b - a \omega\\
-f \omega^q& f& e - d \omega\\
-\omega^q& 1& 0
\end{smallmatrix}\right)$$
and we obtain
$$A=\left(\begin{smallmatrix}
k^{-1}-b\omega^q&b&-k^{-1}g\omega^2\\
(k^{-1}-k-b\omega^q)\omega^q&k+b\omega^q&k^{-1}g\omega\\
g&g\omega&1\\
\end{smallmatrix}\right)$$
where $k\in\GF(q)$, $\N(g)=k^2+\T(b^q\omega)-1$ and $\T(g\omega)=0$  (in order for this matrix to be unitary).  

The determinant of $A$ is
$$1-g^2\omega(\N(\omega)+1) (\omega k^{-2} + b (\N(\omega)+1)k^{-1} +\omega)=1$$
and therefore, the stabiliser of $[U_0,\ell_0]$ in $G_{\mathcal{O}}$ is contained in $\mathsf{SU}_3$.
\end{proof}

The above lemma allows us to attach a value to a Baer subgenerator that is an invariant
for the action of $\mathsf{SU}_3$.

\begin{defn}[Norm of a Baer subgenerator]\label{norm}
Let $\mathcal{O}$ be the Hermitian curve $\mathsf{H}(3,q^2)\cap \pi$, where $\pi$ is the hyperplane
$X_3=0$ of $\PG(3,q^2)$ and let $G_\mathcal{O}$ be the stabiliser of $\mathcal{O}$ in
$\mathsf{PGU}_4(q)$.  Fix a Baer subgenerator $b_0$ of $\mathsf{H}(3,q^2)$ with a point in
$\mathcal{O}$.  Let $b$ be a Baer subgenerator of $\mathsf{H}(3,q^2)$ with a point in $\mathcal{O}$,
and suppose $M_A$ is an element of $G_\mathcal{O}$ such that $b=b_0^{M_A}$.  Then the \textit{norm}
of $b$ is
$$\Vert b\Vert := \det(A).$$ Moreover (by Lemma \ref{lem:stabbaer}), the map $b\mapsto \Vert b\Vert$ induces a group
homomorphism $\phi$ from $G_{\mathcal{O}}$ to the multiplicative subgroup of elements of
$\GF(q^2)^*$ satisfying $\N(x)=1$.
\end{defn}

Note that the kernel of $\phi$ is $\mathsf{SU}_3$. The homomorphism $\phi$ is surjective and hence
there is a natural partition of Baer subgenerators with a point in $\mathcal{O}$ into $q+1$
classes. Each orbit of $\mathsf{SU}_3$ consists of Baer subgenerators with a common value for their norm.

\begin{lemma}\label{lemma:omega}
Let $\mu$ be an element of $\GF(q^2)$ such that $\N(\mu)=1$.  Let $\mathcal{O}$ be a Hermitian
curve of $\mathsf{H}(3,q^2)$ defined by $X_3=0$, and let $\Omega$ be a set of Baer subgenerators
with a point in $\mathcal{O}$ which have norm equal to $\mu$. Then:
\begin{enumerate}
\item[(i)] Every affine point is on $q+1$ elements of $\Omega$ covering a Baer subplane.
\item[(ii)] For every point $X\in\mathcal{O}$ and for every affine point $Y$ in
  $X^\mathfrak{u}$, there is a unique element of $\Omega$ through $X$ and $Y$.
\end{enumerate}
\end{lemma}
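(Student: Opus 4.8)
The plan is to exploit the norm invariant from Definition \ref{norm} together with the transitivity statement of Lemma \ref{lemma:transBaer}, reducing everything to a single explicit orbit computation. First I would fix the model: the Baer subgenerator $b_0$ whose dual is recorded by the pair $[U_0,\ell_0]$, and whose norm we may normalise to be $\mu$ after scaling (since $\phi$ is surjective onto the norm-one subgroup, some $M_A \in G_{\mathcal O}$ sends a norm-$\mu$ representative to $b_0$, and we work in that frame). The set $\Omega$ is then precisely the $\mathsf{SU}_3$-orbit $b_0^{\mathsf{SU}_3}$, because $\mathsf{SU}_3$ is the kernel of $\phi$ and any two Baer subgenerators of the same norm differ by an element of $G_{\mathcal O}$ whose determinant is $1$. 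So $|\Omega| = |\mathsf{SU}_3| / |\mathrm{Stab}_{\mathsf{SU}_3}(b_0)|$, and by Lemma \ref{lem:stabbaer} the stabiliser of $b_0$ in $G_{\mathcal O}$ already lies in $\mathsf{SU}_3$, so that stabiliser is exactly the group of matrices $A$ exhibited at the end of the proof of Lemma \ref{lem:stabbaer} (those with $\det A = 1$, i.e. all of them, since the determinant was computed to be $1$). Counting the parameters $k \in \GF(q)$, $b \in \GF(q^2)$ with $\T(g\omega)=0$ and $\N(g) = k^2 + \T(b^q\omega) - 1$ admitting a solution $g$, one gets $|\mathrm{Stab}| = q(q^2-1)$ or thereabouts; in any case $|\Omega| = q(q^2-1)(q^2+q+1)$, matching the number of affine points times $(q+1)$ divided by $(q+1)$ — this is the bookkeeping that forces the "covering" in (i) once incidence is pinned down.

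For part (i): an affine point $Y$ lies on exactly $q^2+1$ generators of $\mathsf{H}(3,q^2)$ (one through each point of the Hermitian curve $Y^{\mathfrak u} \cap \mathcal{O}$), and on each such generator $\ell$ there are $q^2$ Baer sublines through $Y$ but only those through the point $\ell \cap \mathcal O$ are Baer subgenerators with a point in $\mathcal O$, giving $q$ of them per generator, hence $q(q^2+1)$ candidate Baer subgenerators through $Y$. I claim exactly $q+1$ of these have norm $\mu$. To see this I would use the stabiliser in $\mathsf{SU}_3$ of the affine point $Y$: it acts on the $q(q^2+1)$ Baer subgenerators through $Y$, and the norm is constant on its orbits, so it suffices to show the norm map from these $q(q^2+1)$ objects onto the $q+1$ norm values has fibres of equal size, i.e. the stabiliser $(\mathsf{SU}_3)_Y$ has orbits refining the norm fibres and the quotient $G_{\mathcal O,Y}/(\mathsf{SU}_3)_Y$ acts transitively on the $q+1$ norm values. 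Concretely one checks $|G_{\mathcal O,Y}| = (q+1)\,|(\mathsf{SU}_3)_Y|$ and that $G_{\mathcal O,Y}$ is transitive on all $q(q^2+1)$ Baer subgenerators through $Y$ (this follows by a transitivity argument parallel to Lemma \ref{lemma:transBaer}, using that $G_{\mathcal O,Y}$ contains elements moving $Y^{\mathfrak u}\cap\mathcal O$ around). Then the $q+1$ elements of $\Omega$ through $Y$ form a single orbit under $(\mathsf{SU}_3)_Y$, and since three of them already determine a Baer subplane (two Baer sublines through $Y$ not on a common line span one, by the fact quoted from \cite[Theorem 2.8]{BarwickEbert}) and $(\mathsf{SU}_3)_Y$ preserves that subplane while permuting the $q+1$ lines transitively, all $q+1$ lie in it; comparing the count $q+1$ with the $q+1$ Baer sublines through $Y$ in a Baer subplane gives that they exactly cover it.

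For part (ii): fix $X \in \mathcal O$ and an affine point $Y \in X^{\mathfrak u}$. The line $XY$ is a generator (it joins a point of $\mathcal O$ to a collinear affine point and lies in $X^{\mathfrak u}$), and it meets $\mathcal O$ only in $X$. The Baer subgenerators with a point in $\mathcal O$ that pass through $X$ and $Y$ are exactly the Baer sublines of the generator $XY$ containing both $X$ and $Y$ — and there is exactly one such, since two points of $\PG(1,q^2)$ lie on... no: two points do \emph{not} determine a Baer subline, $q+1$ do. So instead: the Baer sublines of $XY$ through both $X$ and $Y$ form a family of size $q+1$ (pick any third point of $XY$ to pin one down, and there are $q^2-1$ third points grouped into sets of $q-1$ giving the same subline, so $(q^2-1)/(q-1) = q+1$ of them). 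Among these $q+1$ Baer subgenerators through the fixed pair $\{X,Y\}$, I claim exactly one has norm $\mu$: the stabiliser in $G_{\mathcal O}$ of the flag $(X,Y)$ (equivalently of the generator $XY$ together with the marked points $X,Y$) induces a group on these $q+1$ Baer sublines, its image in the norm-value set is all of the $q+1$ norm-one elements (again by a surjectivity/parameter count analogous to the one in Lemma \ref{lem:stabbaer}), and the $\mathsf{SU}_3$-part acts trivially on norms, so the map "Baer subline $\mapsto$ its norm" is a bijection from this size-$(q+1)$ set to the norm-one subgroup. Hence exactly one element of $\Omega$ passes through $X$ and $Y$, as required.

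The main obstacle is the orbit-counting in part (i): one must be sure that $G_{\mathcal O,Y}$ really is transitive on all $q(q^2+1)$ Baer subgenerators through a fixed affine point $Y$ and that the induced action on the $q+1$ norm classes is regular on the fibres, since only then does "$q(q^2+1)$ total, one $G_{\mathcal O}$-orbit, $q+1$ norm values" force exactly $q+1$ of norm $\mu$. Establishing this transitivity cleanly — rather than by a brute-force matrix computation — is the delicate point; everything else is either the already-proved Lemmas or elementary Baer-geometry counting.
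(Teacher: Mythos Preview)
Your counting in part (i) is wrong in a basic way: $\mathsf{H}(3,q^2)$ is a generalised quadrangle of order $(s,t)=(q^2,q)$, so an affine point $Y$ lies on $t+1=q+1$ generators, not $q^2+1$; you have swapped $s$ and $t$. Similarly, the number of Baer sublines of a fixed generator through two fixed points is $(q^2-1)/(q-1)=q+1$, not $q$. With the corrected numbers there are $(q+1)^2$ Baer subgenerators through $Y$ with a point in $\mathcal O$, and now the transitivity you flagged as ``the main obstacle'' is not merely delicate but false: $|G_{\mathcal O,Y}|=|\mathsf{GU}_3(q)|\big/\big(q^2(q^3+1)\big)=q(q^2-1)$, and $(q+1)^2\nmid q(q-1)(q+1)$ for $q>1$. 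The same divisibility obstruction shows that $(\mathsf{SU}_3)_Y$, of order $q(q-1)$, cannot act transitively on the $q+1$ elements of $Y^*$ (since $\gcd(q+1,q(q-1))\le 2$), so your ``$(\mathsf{SU}_3)_Y$ preserves that subplane while permuting the $q+1$ lines transitively'' step has no footing either. In part (ii) the analogous problem recurs: $|G_{\mathcal O,X,Y}|=q(q-1)$, which again is coprime (up to a factor $2$) to $q+1$, so you cannot get the claimed bijection between the $q+1$ Baer sublines through $\{X,Y\}$ and the $q+1$ norm values out of a transitivity argument.

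The paper avoids these traps by not chasing transitivity on Baer subgenerators at all. For (i) it uses only the transitivity of $\mathsf{SU}_3$ on \emph{affine points} (so every $Y$ is on the same number of elements of $\Omega$), and the Baer subplane comes from geometry: each element of $Y^*$ lies in the plane $Y^{\mathfrak u}$, and $Y^{\mathfrak u}\cap\mathcal O$ is already a Baer subline, so any two elements of $Y^*$ together with this line at infinity form a triangle of Baer sublines determining the subplane. For (ii) the paper does precisely the matrix computation you hoped to sidestep: after normalising $X=(1,\omega,0,0)$ and $Y=(0,0,1,\omega)$, it writes the two-point stabiliser $(\mathsf{SU}_3)_{X,Y}$ explicitly as block-diagonal matrices $M_A$ with a $2\times 2$ unitary block and a $1$ in position $(3,3)$, and checks directly that $(A^q)^T U A$ is a scalar multiple of $U$. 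Hence every element of $(\mathsf{SU}_3)_{X,Y}$ fixes the Baer subgenerator $[U,\ell]$, and uniqueness follows. Your norm-bijection formulation is equivalent to this, but it cannot be obtained by the group-theoretic shortcut you sketch; the explicit verification is the content.
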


\begin{proof}
Recall that $\Omega$ is an orbit of $\mathsf{SU}_3$ on Baer subgenerators and $\mathsf{SU}_3$ acts
transitively on the affine points $\mathsf{H}(3,q^2)\backslash\mathcal{O}$, and so clearly every
affine point is on $q+1$ elements of $\Omega$. Moreover, such a set of $q+1$ elements of $\Omega$
will cover a Baer subplane, as we show now. Let $Y$ be an affine point, let $Y^*$ be the set
of $q+1$ elements of $\Omega$ through $Y$ and let $b_0$ be one particular
element of $Y^*$. Then every other element of $Y^*$ is in the orbit of $b_0$ under the stabiliser of $Y$
in $\mathsf{SU}_3$. Now for every $g\in (\mathsf{SU}_3)_Y$, we know that $\langle b_0^g\rangle=\langle b_0\rangle^g
\in Y^\perp$ and so every element of $Y^*$ lies in the plane $Y^\perp$. At infinity, $Y^\perp$ meets $\mathcal{O}$
in a Baer subline and so we have a triangle of Baer sublines spanning a Baer subplane of $Y^\perp$, and
it is covered completely by the elements of $Y^*$.

To complete the proof, we need only prove (ii).
Since the stabiliser of a point in $\mathcal{O}$ is transitive on the set of affine points in the perp of that point, we can
assume that $X=(1,\omega,0,0)$ and $Y=(0,0,1,\omega)$ for some $\omega$ satisfying
$\N(\omega)=-10$.  We have already seen, in the proof of Lemma \ref{lem:stabbaer}, that $X$ and
$Y$ lie on a Baer subgenerator, which we can assume without loss of generality, is in $\Omega$.
This Baer subgenerator is uniquely defined by a $3\times 3$ Hermitian matrix $U$ of rank $2$ and the
generator $\ell$ spanning $X$ and $Y$, and we assume (as before) that $U$ has the form
$$U:=\left(\begin{smallmatrix}
0&0&-\omega\\
0&0&1\\
-\omega^q&1&0\\
\end{smallmatrix}\right).$$
Then the two-point stabiliser of $X$ and $Y$ inside $\mathsf{SU}_3$ consists of elements $M_A$ with $A$
of the form
$$A=\left(\begin{smallmatrix}
a&b&0\\
d&e&0\\
0&0&1\\
\end{smallmatrix}\right)$$
where $(a+d\omega)\omega=b+e\omega$ and
$\left(\begin{smallmatrix}
a&b\\
d&e
\end{smallmatrix}\right)\left(\begin{smallmatrix}
a^q&d^q\\
b^q&e^q
\end{smallmatrix}\right)=I$. Let's consider one of these elements $M_A$. Then
\begin{align*}
(A^q)^TUA
&=\left(\begin{smallmatrix}
0&0&-a^q\omega+d^q\\
0&0&-b^q\omega+e^q\\
-a\omega^q+d&-b\omega^q+e&0\\
\end{smallmatrix}\right)
\end{align*}
and we see that this matrix is a scalar multiple of $U$ (the scalar being $(-b\omega^q+e)$).
Therefore $M_A$ fixes the Baer subgenerator defined by $[U,\ell]$. Hence there is a unique element
of $\Omega$ on $X$ and $Y$.  \end{proof}

%%%%%%%%%%%%%%%%%%%%%%%%%%%%%%%%
%
%  Classifying the suitable sets of Baer subgenerators
%
%%%%%%%%%%%%%%%%%%%%%%%%%%%%%%%%

\subsection{Classifying the suitable sets of Baer subgenerators}\label{sect:classify}

\begin{theorem}\label{thm:omega}
Suppose $\Omega$ is a set of Baer subgenerators of $\mathsf{H}(3,q^2)$ with a point in $\mathcal{O}$,
such that every affine point is on $q+1$ elements of $\Omega$ spanning a Baer subplane.  Then
$\Omega$ is an orbit under $\mathsf{SU}_3$.
\end{theorem}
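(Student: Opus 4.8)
The plan is to show that the map $b\mapsto\Vert b\Vert$ of Definition \ref{norm} is constant on $\Omega$; once that is known, Lemma \ref{lemma:omega} together with a counting argument forces $\Omega$ to be precisely one $\mathsf{SU}_3$-orbit, namely the set of all Baer subgenerators (with a point in $\mathcal{O}$) of that common norm. So the substance of the theorem is: \emph{if every affine point lies on $q+1$ elements of $\Omega$ spanning a Baer subplane, then all elements of $\Omega$ have the same norm.}

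First I would set up the local picture at a fixed affine point $Y$. The $q+1$ elements of $\Omega$ through $Y$ span a Baer subplane contained in $Y^{\mathfrak u}\cap\mathsf{H}(3,q^2)$, which is a degenerate (rank $2$) Hermitian curve — a cone over a Baer subline with vertex $Y$. Each generator through $Y$ that meets $\mathcal O$ carries exactly one Baer subgenerator through $Y$ inside this Baer subplane (three collinear points determine a unique Baer subline), so the $q+1$ elements of $Y^*$ are in bijection with the $q+1$ generators through $Y$, i.e.\ with the $q+1$ points of the Baer subline $Y^{\mathfrak u}\cap\mathcal O$. The key claim is that the norm $\Vert b\Vert$ is the \emph{same} for all $b\in Y^*$. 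I would prove this by an explicit computation in coordinates: normalise $Y=(0,0,1,\omega)$ with $\N(\omega)=-1$, so that $Y^{\mathfrak u}\cap\mathcal O$ is the Baer subline $\{(r,s,0,0):\N(r)+\N(s)=0\}$; the Baer subgenerator through $Y$ and a point $X=(r,s,0,0)$ of this subline is carried by a rank-$2$ Hermitian matrix $U_{r,s}$ of the shape computed at the end of Section \ref{sect:hermitian}. One then exhibits, for any two points $X,X'$ of that subline, an element $M_A\in G_{\mathcal O}$ fixing $Y$ and mapping the first Baer subgenerator to the second; since $M_A$ fixes $Y$, it fixes the plane $Y^{\mathfrak u}$ and permutes $Y^*$, and one checks $\det A = 1$, i.e.\ such transitions are realised inside $\mathsf{SU}_3$. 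Concretely, the stabiliser of $Y$ in $G_{\mathcal O}$ acts on the Baer subline $Y^{\mathfrak u}\cap\mathcal O\cong\PG(1,q)$ as $\mathsf{PGL}_2(q)$ — hence transitively — and the computation in the proof of Lemma \ref{lem:stabbaer} shows the elements effecting this already lie in $\mathsf{SU}_3$. Therefore all $q+1$ elements of $Y^*$ lie in a single $\mathsf{SU}_3$-orbit, so they share a common norm $\nu(Y)$, say.

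Next I would show $\nu(Y)$ is independent of $Y$, using connectedness of the affine points under ``being joined by an element of $\Omega$''. Two affine points $Y,Z$ on a common element $b\in\Omega$ satisfy $b\in Y^*\cap Z^*$, so $\nu(Y)=\Vert b\Vert=\nu(Z)$. Thus $\nu$ is constant on each connected component of the graph on affine points whose edges are pairs lying on a common element of $\Omega$. That graph is connected: every generator meeting $\mathcal O$ carries $q^2$ affine points, and by the point-partition argument (exactly as in the proof in Section \ref{proofconstU3}, which only uses the Baer-subplane hypothesis) $\Omega$ partitions these $q^2$ affine points into $q$ Baer subgenerators of $q$ points each, so any two affine points on a generator are linked by a short chain; and $\mathsf{H}(3,q^2)$ being a generalised quadrangle, the collinearity graph on affine points is connected, whence so is the $\Omega$-graph. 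Hence $\Vert b\Vert=\mu$ is a single value $\mu$ with $\N(\mu)=1$ for all $b\in\Omega$.

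Finally, every element of $\Omega$ lies in the $\mathsf{SU}_3$-orbit $\Omega_\mu$ of Baer subgenerators of norm $\mu$ (the kernel of $\phi$ being $\mathsf{SU}_3$, as noted after Definition \ref{norm}); and a counting argument closes the gap $\Omega=\Omega_\mu$. By Lemma \ref{lemma:omega}(i) the set $\Omega_\mu$ already has the property that every affine point lies on exactly $q+1$ of its members, so $|\Omega_\mu| = (q+1)\cdot(\#\text{affine points})/(q+1) = q^2(q^2-q+1)(q+1)$ wait — more simply, double-counting incident (affine point, element) pairs gives $|\Omega| = |\Omega_\mu|$ since both equal $(\#\text{affine points})\cdot(q+1)/(\#\text{affine points on a fixed Baer subgenerator})$, and $\Omega\subseteq\Omega_\mu$; hence $\Omega=\Omega_\mu$, an $\mathsf{SU}_3$-orbit. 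The main obstacle is the first paragraph's coordinate computation — verifying that the transitions within $Y^*$ are realised by matrices of determinant $1$ — but this is exactly the content already extracted in Lemma \ref{lem:stabbaer}, so it reduces to a careful bookkeeping of that calculation rather than anything genuinely new.
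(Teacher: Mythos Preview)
Your overall architecture is sound and, in fact, matches the paper's: reduce to showing that any two elements of $\Omega$ through a common affine point $Y$ lie in the same $\mathsf{SU}_3$-orbit, then propagate by connectedness. The connectedness and counting steps are fine. The gap is in the first step, and it is precisely where all the content of the theorem lives.

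You claim that for $b,b'\in Y^*$ one can find $M_A\in (G_{\mathcal O})_Y$ with $b^{M_A}=b'$ and $\det A=1$, and that ``the computation in the proof of Lemma \ref{lem:stabbaer} shows the elements effecting this already lie in $\mathsf{SU}_3$''. This is not what Lemma \ref{lem:stabbaer} says: that lemma only shows the \emph{stabiliser} of a single Baer subgenerator lies in $\mathsf{SU}_3$; it tells you nothing about which coset of $\mathsf{SU}_3$ carries $b$ to $b'$. Concretely, through $Y$ there are $(q+1)^2$ Baer subgenerators with a point in $\mathcal O$ (namely $q+1$ on each of the $q+1$ generators through $Y$), and $(G_{\mathcal O})_Y$ acts transitively on all of them while $(\mathsf{SU}_3)_Y$ has $q+1$ orbits of size $q+1$. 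So an arbitrary choice of one Baer subgenerator per generator need \emph{not} give a single $\mathsf{SU}_3$-orbit. What forces the norms to agree is exactly the hypothesis you have not yet used at this point: that the $q+1$ elements of $Y^*$ span a Baer subplane \emph{fully contained in} $\mathsf{H}(3,q^2)$.

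The paper's proof is precisely this missing computation. Fixing $b$ with matrix $U$ on $\ell$ and taking $b'$ with matrix $U'$ on $\ell'$ through the same $Y$, one analyses when the dual Baer sublines defined by $U$ and $U'$ share only the points of $Y^{\mathfrak u}\cap\mathcal O$ (which is equivalent to the Baer subplane being fully contained). This forces the free parameters of $U'$ into a specific form (in the paper's notation, $c=0$ and $\nu=\omega\gamma^{q-1}$), after which an explicit element of $\mathsf{SU}_2(q)\hookrightarrow\mathsf{SU}_3$ is produced sending $b$ to $b'$. This is not bookkeeping from Lemma \ref{lem:stabbaer}; it is a separate calculation in which the ``fully contained'' condition does real work. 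Once you supply that, your outline goes through and coincides with the paper's argument.
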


\begin{proof}
Let $b$ be a Baer subgenerator of $\mathsf{H}(3,q^2)$ with a point in $\mathcal{O}$. If $b'$ is
another Baer subgenerator of $\mathsf{H}(3,q^2)$ with a point in $\mathcal{O}$ such that $b$ and
$b'$ meet in an affine point and span a fully contained Baer subplane, then we will show that there
is some element of $\mathsf{SU}_3$ which maps $b$ to $b'$.  Without loss of generality, we can
choose our favourite Baer subgenerator and our favourite affine point.  Suppose we have a fixed Baer
subgenerator $b$ giving the dual Baer subline defined by
$$U=\left(\begin{smallmatrix}
0&0&-\omega&\\
0&0&1\\
-\omega^q&1&0\\
\end{smallmatrix}\right)$$
and on the generator $\ell=\langle (1,\omega,0,0),(0,0,1,\omega)\rangle$ where $\N(\omega)=-1$.
Let $P$ be the affine point $(0,0,1,\omega)$ and consider an arbitrary generator $\ell'$ on $P$
where $\ell':=\langle (0,0,1,\omega),(1,\nu,0,0)\rangle$ and $\N(\nu)=-1$. Suppose we have a Baer
subgenerator $b'$ on $P$, on the generator $\ell'$, defined by the matrix $U'$. Since every element
of $P^\mathfrak{u}\cap\mathcal{O}$ is in the dual Baer subline defined by $U'$, we have that $U'$ can be written as
$$\left(\begin{smallmatrix}
a&0&\beta\\
0&a&\gamma\\
\beta^q&\gamma^q&c\\
\end{smallmatrix}\right)$$
where $a\in\GF(q)$ and $\beta,\gamma\in\GF(q^2)$. For $(1,\nu,0,0)$ to be in the nullspace of
$U'$, we must have $a=0$ and $\beta=-\gamma\nu$. That is, $U'$ is just
$$\left(\begin{smallmatrix}
0&0&-\gamma\nu\\
0&0&\gamma\\
-\gamma^q\nu^q&\gamma^q&c\\
\end{smallmatrix}\right).$$

Now $b$ and $b'$ span a fully contained Baer subplane if and only if the dual Baer sublines defined by $U$ and $U'$ share
 only the points of $P^\mathfrak{u} \cap \mathcal{O}$, on $\mathcal{O}$.
Indeed suppose, by way of contradiction, that  there is a point $Z$ of
$\mathcal{O}$ in common between the dual
Baer sublines defined by $U$ and $U'$. Then $Z^\mathfrak{u}$ meets $b$
in a point $Q$, different from $L$ and $P$ and  it meets $b'$ in a point
$Q'$ different from $L'$ ($L'=\pi\cap b'$) and $P$.
Thus $Z^\mathfrak{u} \cap P^\mathfrak{u}$ meets $\mathsf{H}(3,q^2)$ in a Baer
subline $b''$ containing $Q$ and $Q'$.  Now the Baer subplane spanned
by $b$ and $b'$ is fully contained if and only if  $b''$ has a point
$T$ in $\mathcal{O}$.
This implies that $T$ and $Z$ are points of $\mathcal{O}$ collinear on
$\mathsf{H}(3,q^2)$; a contradiction.

 Note that $P^\mathfrak{u}\cap\mathcal{O}$ consists of the points of the form $(1,\delta,0,0)$ together with $(0,1,0,0)$.
  Suppose $(1,\delta,\eta,0)$ is an element of both dual
Baer sublines. That is, $(1,\delta,\eta)U(1,\delta^q,\eta^q)^T=0$ and
$(1,\delta,\eta)U'(1,\delta^q,\eta^q)^T=0$. Now

\begin{align*}
(1,\delta,\eta)U(1,\delta^q,\eta^q)^T&= (1,\delta,\eta)\left(\begin{smallmatrix}
0&0&-\omega&\\
0&0&1\\
-\omega^q&1&0\\
\end{smallmatrix}\right)(1,\delta^q,\eta^q)^T\\
&=-\eta\omega^q+\eta\delta^q+(-\omega+\delta)\eta^q\\
&=T(\eta(\delta-\omega)^q),
\end{align*}
\begin{align*}
(1,\delta,\eta)U'(1,\delta^q,\eta^q)^T&= (1,\delta,\eta)\left(\begin{smallmatrix}
0&0&-\gamma\nu\\
0&0&\gamma\\
-\gamma^q\nu^q&\gamma^q&c\\
\end{smallmatrix}\right)(1,\delta^q,\eta^q)^T\\
&=-\eta\gamma^q\nu^q+\eta\gamma^q\delta^q+(-\gamma\nu+\delta\gamma+\eta c)\eta^q\\
&=-(\eta\gamma^q\nu^q+\eta^q\gamma\nu)+(\eta\gamma^q\delta^q+\eta^q\gamma\delta)+c\eta^{q+1}\\
&=T(\eta\gamma^q(\delta-\nu)^q)+c\N(\eta).
\end{align*}
Since $1+\N(\delta)+\N(\eta)=0$, we see that our equations become
\begin{equation*}\tag{*}\label{traceequation}
\T(\eta(\delta-\omega)^q)=0\text{ and }\T(\eta\gamma^q(\delta-\nu)^q)=c(1+\N(\delta))
\end{equation*}
So since the dual Baer sublines defined by $U$ and $U'$ share
only the points of $P^\mathfrak{u}\cap\mathcal{O}$, then whenever condition (\ref{traceequation}) holds for a choice of $\delta$, $\eta$,
we will have $\eta=0$. Therefore, we must have a priori that $c=0$ and $\gamma\notin\GF(q)$.

Let $\eta=(\gamma\nu-\gamma^q\omega)^q$ and
$$\delta=\frac{-\eta^q+\eta^{q-1}\gamma^q(\omega-\nu)^q}{\gamma^q-\gamma}.$$
Then a straightforward
calculation shows that $1+\N(\delta)+\N(\eta)=0$, $\T(\eta(\delta-\omega))= 0$ and $\T(\eta\gamma(\delta-\nu))= 0$, so condition (\ref{traceequation}) holds, and hence
$\eta=0$.
Therefore, $\nu = \omega\gamma^{q-1}$ and
$$U'=\left(\begin{smallmatrix}
0&0&-\gamma^q\omega\\
0&0&\gamma\\
-\gamma \omega^q&\gamma^q&0\\
\end{smallmatrix}\right).$$

We want to show that $U'$ is conjugate to $U$ under some element of $\mathsf{SU}_3(q)$.  Now the
group $\mathsf{SU}_2(q)$ of invertible $2\times 2$ matrices with unit determinant, and fixing the
form $X_0Y_0^q+X_1Y_1^q=0$ on $\GF(q^2)^2$, has $q+1$ orbits on totally isotropic vectors of
$\GF(q^2)^2$. Each orbit consists of vectors $(x,y)$ where $y/x^q$ attains a common
value. Therefore, there exists some element $C_0$ of $\mathsf{SU}_2(q)$ such that
$C_0(-\gamma\nu,\gamma)^T=(-\omega,1)$.  Let
$${\small C:=\left(\begin{array}{c|c}
C_0 &\begin{smallmatrix}0\\0\\ \end{smallmatrix} \\
\hline
 \begin{matrix}0&0 \end{matrix}& 1\\
\end{array}\right).}$$
Then one can check easily that $C$ has determinant $1$ and $CU(C^q)^T=U'$.  Therefore, there is some
element of $\mathsf{SU}_3$ which maps $b$ to $b'$.

For every affine point $P$, let $P^*$ be the set of $q+1$ elements of $\Omega$ incident with
$P$. Then by the above, every element of $P^*$ is contained in a common orbit of $\mathsf{SU}_3$.
Note that $\mathsf{SU}_3$ is transitive on generators of $\mathsf{H}(3,q^2)$, and the stabiliser of
a point $X$ of $\mathcal{O}$ in $\mathsf{SU}_3$ is transitive on the affine points of
$X^\mathfrak{u}$.  Suppose now that $b$ and $b'$ do not meet in an affine point.  Let $P\in b$. Then
$P^*\subset b^{\mathsf{SU}_3}$.  Now there exists $g\in\mathsf{SU}_3$ such that $\langle
b\rangle^g=\langle b'\rangle$ and $P^g\in b'$.  Thus $b'\in (P^g)^*\subset
(b')^{\mathsf{SU}_3}$. Note also that $P^g\in b^g$, and hence $b^g\in
(b')^{\mathsf{SU}_3}$. Therefore $b$ and $b'$ are in the same orbit under $\mathsf{SU}_3$.
\end{proof}

In Section \ref{sect:proofQ6embedding}, we will use the above result to prove Theorem
\ref{thm:Q6embedding}.

%%%%%%%%%%%%%%%%%%%%%%%%%%%%%%%%%%%%%%
%
%   The 6-dimensional parabolic quadric and the Barlotti-Cofman-Segre representation
%
%%%%%%%%%%%%%%%%%%%%%%%%%%%%%%%%%%%%%%

\section{The connection with the 6-dimensional parabolic quadric}\label{sect:parabolicquadric}

A non-degenerate hyperplane section of $\Q(6,q)$ can be of one of two types (up to isometry):
it could induce a hyperbolic quadric $\Q^+(5,q)$ or it could induce an elliptic
quadric $\Q^-(5,q)$.  The stabiliser of a hyperbolic quadric section in $\mathsf{G}_2(q)$ is
isomorphic to $\mathsf{SL}_3(q):2$, whilst the stabiliser of an elliptic quadric section in
$\mathsf{G}_2(q)$ is isomorphic to $\mathsf{SU}_3(q):2$ (see \cite{Kle88a}). These two maximal
subgroups bring forth the two low-dimensional models of the Split Cayley hexagon that appear in this paper,
and a second way to explain the interplay between these `linear' and `unitary' models is 
via Curtis-Tits and Phan systems; see Section \ref{sect:phan}.
We begin first with some
observations about the situation where we fix a $\Q^+(5,q)$ hyperplane section.

The stabiliser $\mathsf{SL}_3(q):2$ of $\Q^+(5,q)$ fixes two disjoint planes $p'$ and $\sigma'$ of
$\Q^+(5,q)$, and then the lines of $\mathcal{H}(q)$ contained in $\Q^+(5,q)$ are just the lines of
$\Q^+(5,q)$ which meet both $p'$ and $\sigma'$ in a point.  It was noticed in \cite{CameronKantor79}
that we can reconstruct $\mathcal{H}(q)$ from these two fixed planes together with an orbit $\Omega$
of $\mathsf{SL}_3(q)$ on affine lines (of size $(q^3-q)(q^2+q+1)$).  We can capture the affine
points by noticing that the $q+1$ hexagon-lines through an affine point span a totally isotropic plane
(sometimes known as an $\mathcal{H}(q)$-plane) meeting $\Q^+(5,q)$ in a line disjoint from both $p'$
and $\sigma'$. Similarly, we can take the polar image of an affine line and consider its
intersection with $\Q^+(5,q)$. This results in a $3$-dimensional quadratic cone of $\Q^+(5,q)$
meeting both $p'$ and $\sigma'$ in a point, but having vertex not in $p'$ nor $\sigma'$.  We can
then employ the Klein correspondence to map our projection of $\mathcal{H}(q)$ on $\Q^+(5,q)$, to
$\mathsf{PG}(3,q)$ (see \cite[\S 15.4]{Hirschfeld85} for more on the Klein correspondence). We summarise this
correspondence below:

\begin{center}
\begin{table}[H]\footnotesize
\begin{tabular}{p{8cm}|p{8cm}}
$\PG(3,q)$&$\Q(6,q)$\\ \hline
\rowcolor[gray]{0.95}Point-plane anti-flag $(p,\sigma)$ &A latin $p'$ and greek plane $\sigma'$ defining a hyperbolic quadric $\Q^+(5,q)$\\
Pencils with vertex not in $\sigma$ and plane not through $p$&Affine points of $\Q(6,q)\backslash \Q^+(5,q)$\\
\rowcolor[gray]{0.95}Lines& Points of $\Q^+(5,q)$\\
Pencils with vertex in $\sigma$ and plane through $p$& Lines of $\Q^+(5,q)$ meeting $p'$ and $\sigma'$ in a point\\
\rowcolor[gray]{0.95}Parabolic congruences & Affine lines of $\Q(6,q)$, quadratic cones of $\Q^+(5,q)$\\
Parabolic congruences having axis not incident with $p$ or $\sigma$, but having a pencil of lines with one line incident with $p$
and another incident with $\sigma$ & Affine lines of $\mathcal{H}(q)$\\
\hline
\end{tabular}
\medskip
\caption{The extended Klein representation.}
\end{table}
\end{center}

Now we describe how we can view $\mathcal{H}(q)$ as substructures of the $3$-dimensional Hermitian
surface $\mathsf{H}(3,q^2)$.  A \textit{$t$-spread} of $\PG(d,q)$ is a collection of $t$-dimensional
subspaces which partition the points of $\PG(d,q)$. So necessarily, $t+1$ must divide $d+1$ and the
size of a $t$-spread of $\PG(d,q)$ is $(q^{d+1}-1)/(q^{t+1}-1)$.  If $t+1$ is half of $d+1$, we
usually call a $t$-spread just a \textit{spread} of $\PG(d,q)$. Suppose we have a $t$-spread
$\mathcal{S}$ of $\PG(d,q)$ and embed $\PG(d,q)$ as a hyperplane in $\PG(d+1,q)$. If we define the
\textit{blocks} to be the $(t+1)$-dimensional subspaces of $\PG(d+1,q)$ not contained in $\PG(d,q)$
incident with an element of the $t$-spread, then together with the affine points
$\PG(d+1,q)\backslash\PG(d,q)$, we obtain a linear space; in fact, a $2$--$(q^{d+1},q^{t+1},1)$
design. This linear representation of a $t$-spread is a generalisation of the commonly called
\textit{Andr\'e/Bruck-Bose construction} (where $t+1=(d+1)/2$), and is fully explained by Barlotti
and Cofman \cite{BaCo}. More generally, it is possible that this construction produces a
Desarguesian affine space and we then say that the given $t$-spread is \textit{Desarguesian}. It
turns out that a $t$-spread $\mathcal{S}$ is Desarguesian if and only if $\mathcal{S}$ induces a
spread in any subspace generated by two distinct elements of $\mathcal{S}$ (see \cite{Lun} and
\cite{Segre64}).

Now consider $\PG(3,q^2)$ and a hyperplane $\pi_\infty$ therein, and identify $\mathsf{AG}(3,q^2)$
with the affine geometry $\PG(3,q^2)\backslash \pi_\infty$.
We will be considering the correspondence between objects in $\mathsf{H}(3,q^2)$ and $\Q(6,q)$,
where $\mathcal{S}$ is a Hermitian spread of a non-degenerate hyperplane section $\Q^-(5,q)$ of
$\Q(6,q)$. One can also obtain this correspondence via field reduction from $\mathsf{H}(3,q^2)$ to
$\Q^+(7,q)$, and then slicing with a non-degenerate hyperplane section (see \cite{Lunardon06}).  We
will call this correspondence the \textit{Barlotti-Cofman-Segre representation} of
$\mathsf{H}(3,q^2)$.  Below we summarise the various correspondences between objects in
$\mathsf{H}(3,q^2)$ and objects in $\Q(6,q)$ obtained by the Barlotti-Cofman-Segre representation of
$\mathsf{H}(3,q^2)$. Throughout, we fix a hyperplane $\Sigma_\infty$ at infinity intersecting
$\Q(6,q)$ in a $\Q^-(5,q)$, which corresponds to a fixed non-degenerate hyperplane $\pi_\infty$ of
$\mathsf{H}(3,q^2)$, and we let $\mathcal{S}$ denote a Hermitian spread of $\Sigma_\infty$.

\begin{center}
\begin{table}[H]\footnotesize
\begin{tabular}{p{8cm}|p{8cm}}
$\mathsf{H}(3,q^2)$&$\Q(6,q)$\\ \hline
\rowcolor[gray]{0.95}Hermitian curve $\mathcal{O}$ of $\pi_\infty$ & Hermitian spread $\mathcal{S}$ of $\Q^-(5,q)$\\
  Affine points $\mathsf{H}(3,q^2)\backslash \pi_\infty$ & Affine points of $\Q(6,q)\backslash  \Q^-(5,q)$\\
\rowcolor[gray]{0.95}Generators of $\mathsf{H}(3,q^2)$&Generators of $\Q(6,q)$ incident with some element of $\mathcal{S}$\\
  Baer subplane contained in  $\mathsf{H}(3,q^2)$ meeting $\mathcal{O}$ in a Baer subline&Generators of  $\Q(6,q)$ not incident with any element of $\mathcal{S}$\\
\rowcolor[gray]{0.95}Baer subgenerators with a point in $\mathcal{O}$& Affine lines of $\Q(6,q)$\\ \hline
\end{tabular}
\medskip
\caption{The Barlotti-Cofman-Segre representation.}\label{BCSrepresentation}
\end{table}
\end{center}

The table below shows how we can directly obtain the model for the split Cayley hexagon on the
$3$-dimensional Hermitian surface via the Barlotti-Cofman-Segre correspondence. We can recover the
affine points of $\Q(6,q)$ by noticing that a plane incident with a spread element will correspond
to a hexagon-plane; a point of $\mathcal{H}(q)$ together with its $q+1$ incident lines.

\begin{center}
\begin{table}[H]\footnotesize
\begin{tabular}{ll|l}
&In $\mathsf{H}(3,q^2)$& Barlotti-Cofman-Segre image in $\Q(6,q)$ \\
\hline
\rowcolor[gray]{0.95} \textsc{Points} & (a) Lines of $\mathsf{H}(3,q^2)$ & Planes of $\Q(6,q)$ containing a spread element.\\
& (b)  Affine points of $\mathsf{H}(3,q^2)\backslash\mathcal{O}$& Affine points of $\Q(6,q)\backslash\Q^-(5,q)$.\\
\hline
\rowcolor[gray]{0.95} \textsc{Lines} & (i) Points of $\mathcal{O}$ & Lines of the Hermitian spread.\\
 & (ii) Elements of $\Omega$ & Affine lines spanning a totally isotropic plane with a spread element.\\ \hline
\end{tabular}
\caption{The split Cayley hexagon in $\mathsf{H}(3,q^2)$.}
\end{table}
\end{center}

%%%%%%%%%%%%%%%%%%%%%%%%%%%%%%%%%%%%%%
%
%  The split Cayley hexagon in the 6-dimensional parabolic quadric
%
%%%%%%%%%%%%%%%%%%%%%%%%%%%%%%%%%%%%%%

\section{Characterising the split Cayley hexagon in the 6-dimensional parabolic quadric}\label{sect:hexagon}

By the Barlotti-Cofman-Segre correspondence, we can translate Theorem \ref{thm:constU3} to a
statement about substructures of $\Q(6,q)$. However, the information that can be transferred via
this correspondence is not sufficient to characterise a set of lines $\Q(6,q)$ as the lines of a
generalised hexagon; there is an additional case.  The natural model of the split Cayley hexagon was
revised in the introduction, and here we briefly point out a characterisation of it as a set of
lines of $\Q(6,q)$. It is a special case of a result of Cuypers and Steinbach \cite[Theorem
  1.1]{CuypersSteinbach}, but we give a direct proof for completeness.

\begin{theorem}\label{thm:HqQuadric}
Let $\mathcal{L}$ be a set of lines of $\Q(6,q)$ such that every point of $\Q(6,q)$
is incident with $q+1$ lines of $\mathcal{L}$ spanning a plane.  Then one of the following
occurs:
\begin{enumerate}
\item[(a)] There is a spread $\mathcal{S}$ of $\Q(6,q)$ such that
$\mathcal{L}$ is equal to the union of the lines contained in each generator of
$\mathcal{S}$.
\item[(b)]
The points of $\Q(6,q)$ together with $\mathcal{L}$ define the points and
lines of a generalised hexagon, and a plane of $\Q(6,q)$ contains $0$ or $q+1$ elements of
$\mathcal{L}$ in it.
\end{enumerate}
\end{theorem}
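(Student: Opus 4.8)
The plan is to start with the local structure at a point. Fix a point $x\in\Q(6,q)$ and let $\pi_x$ be the plane spanned by the $q+1$ lines of $\mathcal{L}$ through $x$. Since every line through a point of $\Q(6,q)$ that lies on the quadric is totally singular, $\pi_x$ is a totally singular plane (a generator), and the $q+1$ lines of $\mathcal{L}$ on $x$ are a pencil of lines of $\pi_x$ through $x$ — i.e. they either all pass through a common second point of $\pi_x$, or they sweep out all of $\pi_x\setminus\{x\}$ except possibly share a line. Actually the cleanest dichotomy: the $q+1$ lines of $\mathcal{L}$ on $x$ span $\pi_x$, so as a line-set in the plane $\pi_x\cong\PG(2,q)$ they are $q+1$ concurrent lines, hence they are \emph{all} the lines through $x$ in $\pi_x$. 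The key structural question is then: for a line $\ell\in\mathcal{L}$ with $x,y\in\ell$, how do $\pi_x$ and $\pi_y$ relate? I would show $\pi_x\cap\pi_y\supseteq\ell$, and then argue that either $\pi_x=\pi_y$ for every such pair (this will force case (a)), or $\pi_x\ne\pi_y$ always (leading to case (b)).

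The first main step is to prove this is a genuine dichotomy — that ``$\pi_x = \pi_y$ for some $\mathcal{L}$-collinear pair'' propagates. Suppose $\pi_x=\pi_y=:\pi$ for some $\ell\in\mathcal{L}$ through $x,y$. Every line of $\mathcal{L}$ through $x$ lies in $\pi$, and every point $z$ of such a line has $\pi_z$ meeting $\pi$ in at least that line; since $\pi_z$ is a generator and two generators of $\Q(6,q)$ meeting in a line are either equal or span a solid inside the quadric (impossible, as $\Q(6,q)$ contains no solid), we get $\pi_z=\pi$. Iterating along $\mathcal{L}$-paths and invoking connectivity of the concurrency graph — wait, the hypothesis here does \emph{not} assume connectivity, unlike Theorem \ref{thm:Q6embedding}. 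So I need to handle the possibility that the propagation only fills out a connected component. In that case $\pi$ is a generator all of whose points $z$ have $\pi_z=\pi$, and the lines of $\mathcal{L}$ inside $\pi$ are exactly all the lines of $\pi$; these $\pi$'s, as $x$ ranges over a component, are pairwise disjoint (if two such generators shared a point $z$, then $\pi_z$ would have to equal both) and every point of $\Q(6,q)$ lies in exactly one — giving a spread $\mathcal{S}$, and case (a). The point I must be careful about: once one component is of ``spread type'', I should check the whole of $\mathcal{L}$ is, i.e. there is no mixed configuration; but a point $z$ on a spread-generator $\pi$ has $\pi_z=\pi$ and hence \emph{all} its $\mathcal{L}$-lines inside $\pi$, so no line of $\mathcal{L}$ of the other type can pass through any point of $\Q(6,q)$ — every point lies on a spread generator. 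Hence case (a) holds globally as soon as it holds somewhere.

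The remaining case is $\pi_x\ne\pi_y$ for every $\mathcal{L}$-collinear pair. Here I would verify the generalised-hexagon axioms exactly as in the proof of the first part of Theorem \ref{thm:constU3}: no digons (partial linear space — two points on a common line $\ell$ determine $\ell$ as $\pi_x\cap\pi_y$ since in a generator two points lie on a unique line, and an $\mathcal{L}$-line through $x$ is determined inside $\pi_x$); no triangles (a triangle of $\mathcal{L}$-lines lies in a plane meeting $\Q(6,q)$ in $\ge$ a triangle, forcing the plane onto $\Q(6,q)$, i.e. it is a generator $\pi$, and then the three vertices $x$ have $\pi_x=\pi$, the excluded case); no quadrangles and no pentagons (a $k$-cycle of $\mathcal{L}$-lines, $k\le 5$, lies in $\Q(6,q)$ and by the generalised quadrangle property of... hmm, $\Q(6,q)$ is a polar space of rank $3$, not a GQ — instead use that any two intersecting lines of $\mathcal{L}$ span a generator, so a short cycle would produce a thin sub-configuration inside a single generator or a pair of generators, again collapsing to the excluded case or contradicting that generators meet in at most a line). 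Then count: every point is on $q+1$ lines of $\mathcal{L}$, every line of $\mathcal{L}$ has $q+1$ points, and $\Q(6,q)$ has $(q+1)(q^2+1)(q^3+1)/\!\sim$ hmm — the number of points of $\Q(6,q)$ is $(q^3-1)(q^3+q^2+q+1)/(q-1)$, equivalently $(q^2+1)(q^4+q^2+1)$; matching the known point-count $(q+1)(q^4+q^2+1)$ of $\mathcal{H}(q)$ forces $s=t=q$ and, with no short cycles, Feit--Higman-style counting (as invoked in Section \ref{proofconstU3}) gives a generalised hexagon. Finally, for the ``$0$ or $q+1$'' clause on planes: a plane $\rho$ meeting $\Q(6,q)$ in a non-degenerate conic meets each generator in at most a point so carries no line of $\mathcal{L}$; a plane meeting the quadric in a line or a line-pair would, if it carried $\ge 1$ line of $\mathcal{L}$, carry that line's worth of points each contributing its own plane $\pi_x\ne\rho$, and one checks the only planes carrying an $\mathcal{L}$-line are the generators $\pi_x$ themselves, which carry exactly $q+1$.

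The main obstacle I anticipate is the bookkeeping in the dichotomy step without the connectivity hypothesis: making precise that a ``spread component'' is forced to be the whole of $\mathcal{L}$, so that the two cases (a) and (b) are genuinely exclusive and exhaustive rather than permitting a hybrid. Everything else is a routine transcription of the rank-$3$ polar space facts (no solids on $\Q(6,q)$; two generators meet in $\emptyset$, a point, or a line) together with the short-cycle arguments already rehearsed in Section \ref{proofconstU3}. The one genuinely new geometric input beyond that section is the ``$0$ or $q+1$ elements of $\mathcal{L}$ in a plane'' statement, which follows once we know every $\mathcal{L}$-line lies in a unique generator of the form $\pi_x$ and each such generator meets $\mathcal{L}$ in a full pencil-free line-set — i.e. all its lines.
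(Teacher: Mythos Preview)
Your overall plan---split into a ``spread'' case and a ``hexagon'' case, then exclude short cycles---matches the paper's, but the propagation step contains a genuine error. You assert that two generators of $\Q(6,q)$ meeting in a line are either equal or span a totally singular solid. This is false: through every totally singular line $\ell$ of $\Q(6,q)$ there pass exactly $q+1$ distinct generators (the quotient $\ell^\perp/\ell$ is a conic $\Q(2,q)$), and the solid spanned by any two of them is \emph{not} on the quadric. So from $\pi_z\cap\pi\supseteq\text{(a line)}$ you cannot deduce $\pi_z=\pi$. The propagation within a single plane can be salvaged---if $\pi_x=\pi_y=\pi$ and $z\in\pi\setminus xy$ then both $xz$ and $yz$ are $\mathcal L$-lines through $z$ and already span $\pi$, forcing $\pi_z=\pi$---but as written your argument rests on a false fact about the polar space.

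Your globalisation of the spread case is also incomplete: you only argue about points lying \emph{on} the full generator $\pi$, not about an arbitrary point of $\Q(6,q)$. The paper's route here is quite different and avoids propagation altogether. Having classified the possible intersection numbers of a plane with $\mathcal L$ as $0,1,q+1,q^2+q+1$, it supposes some generator $\pi$ is full and some $\ell\in\mathcal L$ lies in no full plane; then the $\pi_y$ for $y\in\ell$ must be the $q+1$ pairwise distinct generators through $\ell$, and choosing $p\in\pi\cap\ell^\perp$ (which exists by dimension) produces a point with an extra $\mathcal L$-line in $\langle p,\ell\rangle$, contradicting $|p^*|=q+1$. No connectivity is needed. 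Your quadrangle/pentagon sketches are also too vague: the arguments of Section~\ref{proofconstU3} use the GQ property of $\mathsf{H}(3,q^2)$, which is not available here; the paper instead shows that in a quadrangle $RSTU$ the planes $\pi_S,\pi_T,\pi_U$ lie in the $3$-space $\langle R,S,T,U\rangle$, forcing it to be totally singular, and treats the pentagon similarly.

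Finally, your last paragraph is wrong on both counts. In case~(b) each generator $\pi_x$ meets $\mathcal L$ precisely in the \emph{pencil} of lines through $x$, not ``all its lines'', and an $\mathcal L$-line $\ell$ lies in $q+1$ distinct generators $\pi_y$ (one for each $y\in\ell$), not a unique one. The $0$/$(q+1)$ clause follows cleanly by double counting: the map $w\mapsto\pi_w$ is injective on points of $\Q(6,q)$ (else two points would share their pencil-plane, collapsing to case~(a)), so $N_{q+1}=(q^3+1)(q^2+q+1)$; and since each $\ell\in\mathcal L$ lies in exactly $q+1$ generators, counting incidences $(\sigma,\ell)$ with $\ell\subset\sigma$ two ways forces $N_1=0$.
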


\begin{proof}
Let $\Gamma$ be the geometry having the points of $\Q(6,q)$ as its points, and having $\mathcal{L}$
as its set of lines.
Clearly $\Gamma$ is a partial linear space where there are $q+1$ lines through every point, and $q+1$
points through every line. We will write $P^*$ for the pencil of $q+1$ lines of $\mathcal{L}$ incident
with $P$.

Since every plane of $\PG(6,q)$ meets $\Q(6,q)$ in a full plane, a conic, a line, a pair of
concurrent lines or a point, it follows that every plane intersects $\mathcal{L}$ in $q^2+q+1$,
$q+1$, $1$ or $0$ lines. We will show now that the first possibility leads to case (a). Suppose
there is a plane $\pi$ with $q^2+q+1$ lines of $\mathcal{L}$.  Let $\ell$ be an element of
$\mathcal{L}$ not contained in such a plane. Then the $q+1$ planes on the tangent quadric containing $\ell$ (i.e., the points collinear to all the points on $\ell$)
contain $q+1$ elements of $\mathcal{L}$. Since there is always at least one point $p$ of $\pi$
collinear with all points of $\ell$, we see that the point $p$ is now incident with at least $q+2$
elements of $\mathcal{L}$; a contradiction. Hence either every point is in a plane with $q^2+q+1$
elements of $\mathcal {L}$ (and we obtain the spread of $\Q(6,q)$), or no point is.

Suppose now that $\mathcal{L}$ is not partitioned by a spread of $\Q(6,q)$.  So no plane of
$\PG(6,q)$ contains $q^2+q+1$ elements of $\mathcal{L}$, and therefore, every plane intersects
$\mathcal{L}$ in $0$, $1$ or $q+1$ lines. We continue now to prove that $\Gamma$ is a generalised
hexagon. Clearly there is no triangle formed by lines of $\mathcal{L}$, so suppose we have a
quadrangle $R$, $S$, $T$, $U$ in $\Gamma$. Note that these points do not lie in a common plane.  The
planes spanning $T^*$, $U^*$ and $R^*$ are three totally singular planes contained in a common
3-space, which implies that this 3-space is also totally singular; a contradiction.  Suppose now we
have a pentagon $R$, $S$, $T$, $U$, $W$ of $\Gamma$, (and the ordering of these points is
important).  So $RSTU$ spans a 3-space intersecting $\Q(6,q)$ in two totally singular planes, namely
$S^*$ and $T^*$. Now $W$ is collinear (in $\mathcal{L}$) with $R$ and $U$, and therefore the line
$RU$ is totally singular; which implies that $RSTU$ is totally singular, a contradiction. So there
are no $k$-gons in $\Gamma$ with $k<6$. Since $\mathcal{L}$ has size equal to the number of points
of $\Q(6,q)$, it follows that $\Gamma$ is a generalised hexagon of order $q$.

Let $N_i$ be the number of planes of $\Q(6,q)$ containing $i$ elements of $\mathcal{L}$.
So $N_0+N_1+N_{q+1}=(q+1)(q^2+1)(q^3+1)$. Now each point is on a unique
plane containing $q+1$ elements of $\mathcal{L}$, and so $N_{q+1}=(q^3+1)(q^2+q+1)$.
Now for a given point $X$, all but one of the planes on $X$ would have no lines of $\mathcal{L}$ on it,
which accounts for $N_0=q^3(q^3+1)$ planes (n.b., there are $(q+1)(q^2+1)$ planes
on any point, and a plane contains $q^2+q+1$ points). So it follows that $N_1=0$.
\end{proof}

\begin{lemma}\label{lemma:hermitianspread}
Let $\mathcal{L}$ be a set of lines of $\Q(6,q)$ such that every point $X$ of $\Q(6,q)$ is incident with
$q+1$ lines of $\mathcal{L}$ spanning a plane $X^*$, and such that the concurrency graph of $\mathcal{L}$
is connected.  Suppose $\Pi$ is a nondegenerate hyperplane meeting $\Q(6,q)$ in a $\Q^-(5,q)$-quadric.
Then the set $\mathcal{S}:=\{X^*\cap \Pi: X\in\Q^-(5,q)\}$ defines a Hermitian spread of $\Q^-(5,q)$.
\end{lemma}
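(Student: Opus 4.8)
The plan is to extract, from the intrinsic generalised-hexagon structure on $\Q(6,q)$ provided by $\mathcal{L}$, a spread of the elliptic slice $\Q^-(5,q)$ and then identify it as Hermitian. First I would observe that since the concurrency graph of $\mathcal{L}$ is connected, case (a) of Theorem \ref{thm:HqQuadric} cannot occur: a spread-of-generators configuration has disconnected concurrency graph (lines in distinct generators of a spread never meet). Hence case (b) holds, so $(\Q(6,q),\mathcal{L})$ is a generalised hexagon isomorphic to $\mathcal{H}(q)$, every plane of $\Q(6,q)$ carries $0$ or $q+1$ lines of $\mathcal{L}$, and through each point $X$ there is a unique plane $X^*$ meeting $\Q(6,q)$ in exactly the $q+1$ lines of $X^*$. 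This is the structural input I will exploit.

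Next I would restrict attention to points $X\in\Q^-(5,q)=\Q(6,q)\cap\Pi$ and consider $s_X:=X^*\cap\Pi$. Since $X^*$ is a totally singular plane of $\Q(6,q)$ on $X$ and $\Pi$ is a hyperplane, $s_X$ is either $X^*$ itself (if $X^*\subseteq\Pi$) or a line of $\Pi$ through $X$, totally singular in $\Q^-(5,q)$; but $\Q^-(5,q)$ contains no planes, so $X^*\not\subseteq\Pi$ and $s_X$ is always a (generator) line of $\Q^-(5,q)$. The key claim is that $\{s_X : X\in\Q^-(5,q)\}$ partitions the points of $\Q^-(5,q)$, i.e.\ is a spread. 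For this I must show that if $Y\in s_X$ then $s_Y=s_X$; equivalently, that $Y^*\cap\Pi = X^*\cap\Pi$ whenever $Y$ lies on the line $X^*\cap\Pi$. This I would derive from the hexagon geometry: the line $s_X$ is a line of $\mathcal{L}$ (it lies in $X^*$ which is covered by $\mathcal{L}$-lines — here I use that a totally singular plane meeting $\Q(6,q)$ is covered by $q+1$ lines of $\mathcal{L}$ only when it equals some $Z^*$, and one checks $X^*$ does contain $\mathcal{L}$-lines through $X$, in fact all of them lie in $X^*$), hence $s_X\in Y^*$ for every $Y\in s_X$, and by uniqueness of the plane through $Y$ carrying a pencil of $\mathcal{L}$-lines we get $Y^* = X^*$... wait — rather, $s_X$ is one of the $q+1$ lines of $Y^*$, so $Y^*$ is the plane spanned by the $\mathcal{L}$-pencil at $Y$, and since $s_X\subseteq\Pi$ the plane $Y^*$ need not lie in $\Pi$; what we get is that $s_X\subseteq Y^*\cap\Pi$, and since the latter is a line, $s_X = Y^*\cap\Pi = s_Y$. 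This gives the partition. A counting check confirms $|\mathcal{S}| = |\Q^-(5,q)|/(q+1) = (q^3+1)(q^2+1)/(q+1)=(q^3+1)(q+1)/? $ — I would just verify it matches the size $q^3+1$ of a spread of $\Q^-(5,q)$ directly, each generator of $\Q^-(5,q)$ having $q+1$ points.

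Finally I would show $\mathcal{S}$ is a \emph{Hermitian} spread. The cleanest route is to invoke the classification of locally Hermitian / regular spreads of $\Q^-(5,q)$: a spread of $\Q^-(5,q)$ stabilised by a subgroup acting with the right transitivity, or equivalently a spread arising as a system of generators closed under the relevant geometric operation, is Hermitian. Concretely, I would argue that $\mathcal{S}$ is the spread induced on $\Pi$ by the $\mathcal{H}(q)$-structure, and that the stabiliser in $\mathrm{G}_2(q)$ of $\Pi$ — which is $\mathrm{SU}_3(q){:}2$ by \cite{Kle88a} — acts on $\mathcal{S}$; since $\mathrm{SU}_3(q)$ acting on $\Q^-(5,q)$ has a unique invariant spread, namely the Hermitian one (equivalently, via the Barlotti-Cofman-Segre/field-reduction dictionary of Table \ref{BCSrepresentation}, $\mathcal{S}$ corresponds to the Hermitian curve $\mathcal{O}$ of $\mathsf{H}(3,q^2)$), it follows that $\mathcal{S}$ is Hermitian.

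The main obstacle I anticipate is the verification in the middle paragraph that $s_X$ is genuinely a line of $\mathcal{L}$ — or more precisely, pinning down exactly why the partition property holds without circular reasoning. The subtlety is that $X^*\cap\Pi$ being a line of $\Q^-(5,q)$ is immediate, but concluding it is independent of the choice of $X$ on it requires knowing that the hexagon-plane through a point is determined by the pencil of $\mathcal{L}$-lines there, and that $X^*\cap\Pi$ carries such a pencil (which forces the plane through that pencil to meet $\Pi$ exactly in that line). Handling the degenerate-looking possibility that two planes $X^*, Y^*$ meet $\Pi$ in the same line yet are distinct is fine — indeed that is exactly what happens for the $q+1$ points of a common spread line — but one must make sure no point of $\Q^-(5,q)$ is left uncovered and none is covered twice, which is where the connectivity hypothesis (ruling out case (a)) and the plane-intersection dichotomy $0$ or $q+1$ do the real work.
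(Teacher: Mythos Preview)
Your argument that $\mathcal{S}$ is a spread is correct and in fact a bit cleaner than the paper's: once you observe that $s_X=X^*\cap\Pi$ is a line through $X$ lying in $X^*$, hence one of the $q+1$ lines of $\mathcal{L}$ through $X$, the implication $Y\in s_X\Rightarrow s_X\subseteq Y^*\cap\Pi=s_Y$ gives the partition immediately. The worry you flag in your final paragraph is not really an obstacle.

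The genuine gap is in your identification of $\mathcal{S}$ as \emph{Hermitian}. You assert in the first paragraph that Theorem \ref{thm:HqQuadric}(b) gives a hexagon ``isomorphic to $\mathcal{H}(q)$'', but it does not: it only yields \emph{some} generalised hexagon of order $q$. The identification with $\mathcal{H}(q)$ is precisely Theorem \ref{thm:Q6embedding}, whose proof \emph{uses} the present lemma. Consequently you cannot invoke $\mathsf{G}_2(q)$, nor its hyperplane stabiliser $\mathsf{SU}_3(q){:}2$, nor the Barlotti--Cofman--Segre dictionary of Table \ref{BCSrepresentation} (which already presupposes a Hermitian spread as input). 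Every route you sketch in the third paragraph is circular.

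The paper avoids this by proving directly, using only the abstract generalised-hexagon structure guaranteed by Theorem \ref{thm:HqQuadric}(b), that $\mathcal{S}$ is \emph{regular}: given $\ell,m\in\mathcal{S}$ and a line $r$ of the regulus they determine in $\Q^+(3,q)=\langle\ell,m\rangle\cap\Q^-(5,q)$, one supposes $r\notin\mathcal{S}$ and derives a contradiction. The argument analyses the $q+1$ planes $Z_i^*$ for $Z_i\in r$, uses the $0/(q+1)$ dichotomy on planes and a distance-$4$ computation in the hexagon to force $r$ into some $V^*$, and reaches a contradiction with $Z_1^*\subseteq r^\perp$. Once regularity is established, one cites \cite[\S 3.1.2]{BTVM98} and \cite{Luyckx:2001fk} to conclude that a regular spread of $\Q^-(5,q)$ is Hermitian. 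You gestured at this route (``closed under the relevant geometric operation'') but did not pursue it; that is where the real work lies.
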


\begin{proof}
Any pair of lines of $\mathcal{S}$ are disjoint since otherwise they would intersect
in a point $P$ and the plane $P^*$ spanned by the $q+1$ elements of $\mathcal{L}$ incident
with $P$ would then be contained in $\Q^-(5,q)$. Therefore, $\mathcal{S}$ forms a spread of
$\Q^-(5,q)$.

Now consider two elements $\ell$ and $m$ of $\mathcal{S}$. The solid $\langle \ell,m\rangle$ meets
$\Q^-(5,q)$ in  a $\Q^+(3,q)$ section. The polar image of $\langle \ell,m\rangle$, within $\Q(6,q)$, is then a 
plane meeting $\Q(6,q)$  in a non-degenerate conic $\mathcal{C}$.
Let $r$ be a line in the regulus determined by $\ell$ and $m$, and suppose for a proof by contradiction
that $r$ is not an element of $\mathcal{S}$. Then each of the $q+1$ points $Z_i$ on $r$ defines a different element
$\ell_i:=Z_i^*\cap \Pi$ of $\mathcal{S}$. Since the lines contained in $\langle \ell,m\rangle$ concurrent with $r$
form the opposite-regulus to that defined by $\ell$ and $m$, it follows that none of the $\ell_i$ are contained
in $\langle \ell, m \rangle$.

Since $\ell$ is a line of $\mathcal{L}$  and, by Theorem 4.1, a plane of $\Q(6,q)$ has $0$ or $q+1$ elements of
$\mathcal{L}$ contained in it, each of the $q+1$ planes $\langle \ell,X_i\rangle$ for
$X_i\in \mathcal{C}$ is a plane $Y^*$ for some $Y\in \ell$. Similarly, each of the $q+1$ planes $\langle m,X_i\rangle$
is a plane $Y^*$ for some $Y\in m$. Hence for each $X_i\in \mathcal{C}$ the plane $X_i^*$ meets $\langle l,m\rangle$ in a line of the opposite regulus. Therefore, there is a one-to-one correspondence between points 
$X_i$ of $\mathcal{C}$ and points $Z_i$ of $r$. That is, the line $X_iZ_i$ is a line of $\mathcal{L}$ for every $i$.

Recall that the concurrency graph of $\mathcal{L}$ is connected, and so by Theorem \ref{thm:HqQuadric},
$\mathcal{L}$ forms the lines of a generalised hexagon. Let $Z_1$ and $Z_2$ be two  elements on $r$.
 Now $Z_1^\perp$ is a hyperplane and $Z_2^*$ is a plane, so we have two cases: (i) $Z_2^*$ is contained in $Z_1^\perp$, or (ii) $Z_2^*$ meets $Z_1^\perp$ in a line $n$.
The first case cannot arise as a plane of $\Q(6,q)$ contained in $Z_1^\perp$ must go through $Z_1$, and we have assumed that $r$ is not in $\mathcal{L}$.
 Suppose we have the second case. Since $Z_2$ lies in $Z_1^\perp$, the line $n$ lies on $Z_2$ and so $n$ is a line of $\mathcal{L}$ in $Z_2^*$. Note that $\langle Z_1,n\rangle$ is a plane of $\Q(6,q)$ having
at least one element of $\mathcal{L}$ contained in it.
By Theorem 4.1, a plane of $\Q(6,q)$ has $0$ or $q+1$ elements of $\mathcal{L}$ contained
in it. Therefore, there is some point $V$ on $n$ such that $Z_1\in V^*$.
Hence we have a line of $\mathcal{L}$ going through $Z_1$ concurrent with $n$,
and $Z_1$ and $Z_2$ are at distance $4$.
This requirement then forces $r$ to lie in $V^*$.
 and hence each $Z_i^*$ goes through $V$.

Now $\langle \mathcal{C}\rangle $ is a non-degenerate plane through $\Pi^\perp$ and
$\Pi^\perp \notin \langle Z_i^*: Z_i\in r\rangle$. Therefore, each $Z_i^*$ meets
the conic only in the point $X_i$.
The lines $X_1Z_1$ and $VZ_1$ are lines of $\mathcal{L}$ and since $X_1,V,Z_1\in r^\perp$,
we have that $Z_1^*$  is contained in $r^\perp$; a contradiction. 
(Otherwise, $Z_1^*$ would be a plane through $Z_2$).
Hence $r\in \mathcal{S}$ and
$\mathcal{S}$ is closed under taking reguli.
By  \cite[\S 3.1.2]{BTVM98} and \cite{Luyckx:2001fk}, such a spread of $\Q^-(5,q)$ is necessarily a Hermitian spread of $\Q^-(5,q)$.
\end{proof}

\begin{proof}[Proof of Theorem \ref{thm:Q6embedding}]\label{sect:proofQ6embedding}
First we will translate the hypothesis to the 3-dimensional Hermitian variety $\mathsf{H}(3,q^2)$
via the Barlotti-Cofman-Segre correspondence. So let us fix a non-degenerate hyperplane section
$\Q^-(5,q)$ and consider the set $\mathcal{S}$ of lines of $\mathcal{L}$ that are contained in
$\Q^-(5,q)$. By Lemma \ref{lemma:hermitianspread}, $\mathcal{S}$ is a Hermitian spread of $\Q^-(5,q)$
and so we have the ingredients for the Barlotti-Cofman-Segre correspondence, whereby the spread
$\mathcal{S}$ corresponds to a fixed Hermitian curve $\mathcal{O}$ of $\mathsf{H}(3,q^2)$.  Recall
that the elements of $\mathcal{L}$ not contained in $\Q^-(5,q)$ are mapped to a subset $\Omega$ of
the Baer subgenerators having a point in $\mathcal{O}$.  Also, the affine points of $\Q(6,q)$ are
mapped to the affine points of $\mathsf{H}(3,q^2)\backslash\mathcal{O}$.  We will show that $\Omega$
satisfies the hypotheses of Theorem \ref{thm:constU3}; that is, a generator spanned by $q+1$
elements of $\mathcal{L}$ corresponds to a Baer subplane of $\mathsf{H}(3,q^2)$.  Now by Theorem
\ref{thm:HqQuadric}, we have either (a) $\mathcal{L}$ is the union of lines of the planes of a
spread $\mathcal{S}$ of $\Q(6,q)$, or (b) $\mathcal{L}$ forms the lines of a generalised
hexagon. Case (a) cannot occur as the concurrency graph of $\mathcal{L}$ is connected. So
$\mathcal{L}$ is the lines of a generalised hexagon embedded into $\Q(6,q)$.  Let $P$ be an affine
point of $\Q(6,q)$ and let $P^*$ be the $q+1$ elements of $\mathcal{L}$ incident with $P$. By our
hypothesis, $P^*$ spans a plane $\pi_P$. If this plane were to be incident with an element of
$\mathcal{S}$, then $\pi_P$ would contain more than $q+1$ elements of $\mathcal{L}$ thus implying
that $\pi_P$ would have all of its lines in $\mathcal{L}$; this would then imply that the
concurrency graph of $\mathcal{L}$ is disconnected (see the proof of Lemma
\ref{thm:HqQuadric}). Therefore, $\pi_P$ is not incident with any element of $\mathcal{S}$, and
hence, $\pi_P$ meets $\Q^-(5,q)$ in a transversal line to $q+1$ elements of $\mathcal{S}$.  By the
Barlotti-Cofman-Segre correspondence, $\pi_P$ corresponds to a Baer subplane of $\mathsf{H}(3,q^2)$,
as required.

By Theorem \ref{thm:omega}, $\Omega$ is an orbit of $\mathsf{SU}_3$. Moreover,
this group $\mathsf{SU}_3$ lies within the stabiliser in $\mathsf{PGU}_4(q)$  of a non-degenerate hyperplane,
and so corresponds to a subgroup $\overline{\mathsf{SU}_3}$ of the stabiliser of $\mathcal{S}$.
Now there are $q+1$ split Cayley hexagons whose lines not lying in $\Q^-(5,q)$ form
an orbit under $\overline{\mathsf{SU}_3}$, so it remains to
observe that $\overline{\mathsf{SU}_3}$ has only $q+1$ orbits of size $q(q+1)(q^3+1)$. Indeed, the orbits of
$\overline{\mathsf{SU}_3}$ on lines of $\Q(6,q)$ can be described completely geometrically from
the corresponding orbits of objects in $\mathsf{H}(3,q^2)$ (see Table \ref{BCSrepresentation}). Therefore,
$\Omega$ is the set of lines of some split Cayley hexagon (having a set of lines containing $\mathcal{S}$).

\begin{center}
\begin{table}[H]\footnotesize
\begin{tabular}{p{6cm}|p{6cm}|p{4cm}}
Orbits in $\mathsf{H}(3,q^2)$&Orbits on lines of $\Q(6,q)$&Size\\ \hline
\rowcolor[gray]{0.95}Hermitian curve $\mathcal{O}$ of $\pi_\infty$ & Hermitian spread $\mathcal{S}$ of $\Q^-(5,q)$&$q^3+1$\\
  Affine points $\mathsf{H}(3,q^2)\backslash \pi_\infty$ & Lines of $\Q^-(5,q)$ not in $\mathcal{S}$&$q^2(q^3+1)$\\
\rowcolor[gray]{0.95} Baer subgenerators with no point in $\mathcal{O}$& Affine lines not meeting an element of $\mathcal{S}$ in a totally singular plane &
$q^2(q^2-1)(q^3+1)$\\
Baer subgenerators with a point in $\mathcal{O}$& Affine lines meeting an element of $\mathcal{S}$ in a totally singular plane & $(q+1)\times q(q+1)(q^3+1)$\\ \hline
\end{tabular}
\medskip
\caption{Orbits of $\overline{\mathsf{SU}_3}$ on lines of $\Q(6,q)$.}\label{BCSrepresentation2}
\end{table}
\end{center}

\end{proof}

\section{A connection with Phan theory}\label{sect:phan}

In the theory of linear algebraic groups, if a simply connected simple algebraic group $G$ of type $B_n$,
$C_n$, $D_{2n}$, $E_7$, $E_8$, $F_4$ or $G_2$ has a Curtis-Tits system for its extended Dynkin
diagram then there is a \textit{twisted} version known as a \textit{Phan system} for associated finite groups
corresponding to fixed points of so-called \textit{Frobenius maps} of $G$,
 where the $\mathsf{SL}_2$-subgroups of the Curtis-Tits system are replaced with certain
$\mathsf{SU}_2$-subgroups. This phenomenon has been known since the 1970's to both group theorists
and those working in the theory of twin buildings. In a Curtis-Tits system for a
finite group $G$ (defined over $\GF(q)$), if $K_\alpha$ and $K_\beta$ are two
$\mathsf{SL}_2$-subgroups for two fundamental roots $\alpha$ and $\beta$ joined by a single bond,
then $\langle K_\alpha,K_\beta\rangle$ is isomorphic to $\mathsf{(P)SL}_3(q)$. Whereas in the
corresponding Phan system, a single bond represents an amalgam $\langle K_\alpha,K_\beta\rangle$
isomorphic to $\mathsf{(P)SU}_3(q)$. (See  \cite{BGHS}, \cite{BY} and \cite{Gramlich09} for more on Phan systems).
The geometric model of the split Cayley hexagon that we presented
in this paper was inspired by a \textit{unitary} analogue of the $\mathsf{SL}_3$-model introduced by
Cameron and Kantor \cite{CameronKantor79}.

\begin{table}[H]
\begin{tabular}{c|c}
Curtis-Tits system & Phan system\\
\begin{tikzpicture}[scale=1.4]
\node (a) at (0,0) [circle, draw,inner sep=2pt] {};
\node (b) at (1,0) [circle, draw, inner sep=2pt] {};
\node (c) at (2,0) [circle, draw, inner sep=2pt] {};
\draw (a) to (b);
\draw (b) to (c);
\draw (a.30) to (b.150);
\draw (a.-30) to (b.-150);
\draw (.5,.1) to (.4,0);
\draw (.4,0) to (.5,-.1);
\node [below] at (a.south) {\footnotesize $\alpha$};
\node [below] at (b.south) {\footnotesize $\beta$};
\node [below] at (c.south) {\footnotesize ${-3\alpha-2\beta}$};
\node at (0.5,0.3) {\scriptsize $\mathsf{G}_2(q)$};
\node at (1.5,0.3) {\scriptsize $\mathsf{SL}_3(q)$};
\end{tikzpicture}
&
\begin{tikzpicture}[scale=1.4]
\node (a) at (0,0) [circle, draw,inner sep=2pt] {};
\node (b) at (1,0) [circle, draw, inner sep=2pt] {};
\node (c) at (2,0) [circle, draw, inner sep=2pt] {};
\draw (a) to (b);
\draw (b) to (c);
\draw (a.30) to (b.150);
\draw (a.-30) to (b.-150);
\draw (.5,.1) to (.4,0);
\draw (.4,0) to (.5,-.1);
\node [below] at (a.south) {\footnotesize $\alpha$};
\node [below] at (b.south) {\footnotesize $\beta$};
\node [below] at (c.south) {\footnotesize ${-3\alpha-2\beta}$};
\node at (0.5,0.3) {\scriptsize $\mathsf{G}_2(q)$};
\node at (1.5,0.3) {\scriptsize $\mathsf{SU}_3(q)$};
\end{tikzpicture}\\
\end{tabular}
\medskip
\caption{A summary of the Curtis-Tits and Phan systems for the extended Dynkin diagram of $\mathsf{G}_2(q)$.}
\end{table}

\section{Acknowledgements}

The authors thank Prof Frank De Clerck for various discussions concerning this work (and for
\textit{ontbijten}!), and they also thank Prof Guglielmo Lunardon for his comments on Theorem
\ref{thm:HqQuadric}. We thank Dr {\c{S}}{\"u}kr{\"u} Yal{\c{c}}{\i}nkaya for his expert advice on Phan
systems.  This work was supported by the GOA-grant ``Incidence Geometry'' at Ghent University. The
first author acknowledges the support of a Marie Curie Incoming International Fellowship within the
6th European Community Framework Programme (MIIF1-CT-2006-040360), and the second author
acknowledges the support of a GNSAGA-grant.

We are especially grateful to the anonymous referees whose remarks have greatly improved the
exposition and clarity of this paper.


\begin{thebibliography}{10}

\bibitem{BaCo}
A.~Barlotti and J.~Cofman.
\newblock Finite {S}perner spaces constructed from projective and affine
  spaces.
\newblock {\em Abh. Math. Sem. Univ. Hamburg}, 40:231--241, 1974.

\bibitem{BarwickEbert}
S.~Barwick and G.~Ebert.
\newblock {\em Unitals in projective planes}.
\newblock Springer Monographs in Mathematics. Springer, New York, 2008.

\bibitem{BGHS}
C.~D. Bennett, R.~Gramlich, C.~Hoffman, and S.~Shpectorov.
\newblock Curtis-{P}han-{T}its theory.
\newblock In {\em Groups, combinatorics \& geometry ({D}urham, 2001)}, pages
  13--29. World Sci. Publ., River Edge, NJ, 2003.

\bibitem{BTVM98}
I.~Bloemen, J.~A. Thas, and H.~Van~Maldeghem.
\newblock Translation ovoids of generalized quadrangles and hexagons.
\newblock {\em Geom. Dedicata}, 72(1):19--62, 1998.

\bibitem{BY}
A.~Borovik and {\c{S}}.~Yal{\c{c}}{\i}nkaya.
\newblock Construction of {C}urtis-{P}han-{T}its system in black box classical
  groups.
\newblock {\em preprint}.

\bibitem{CameronKantor79}
P.~J. Cameron and W.~M. Kantor.
\newblock {$2$}-transitive and antiflag transitive collineation groups of
  finite projective spaces.
\newblock {\em J. Algebra}, 60(2):384--422, 1979.

\bibitem{CuypersSteinbach}
H.~Cuypers and A.~Steinbach.
\newblock Near hexagons and triality.
\newblock {\em Beitr\"age Algebra Geom.}, 45(2):569--580, 2004.

\bibitem{FeitHigman}
W.~Feit and G.~Higman.
\newblock The nonexistence of certain generalized polygons.
\newblock {\em J. Algebra}, 1:114--131, 1964.

\bibitem{Gramlich09}
R.~Gramlich.
\newblock Developments in finite {P}han theory.
\newblock {\em Innov. Incidence Geom.}, 9:123--175, 2009.

\bibitem{Hirschfeld85}
J.~W.~P. Hirschfeld.
\newblock {\em Finite projective spaces of three dimensions}.
\newblock Oxford Mathematical Monographs. The Clarendon Press Oxford University
  Press, New York, 1985.
\newblock Oxford Science Publications.

\bibitem{Hirschfeld98}
J.~W.~P. Hirschfeld.
\newblock {\em Projective geometries over finite fields}.
\newblock Oxford Mathematical Monographs. The Clarendon Press Oxford University
  Press, New York, second edition, 1998.

\bibitem{Kle88a}
P.~B. Kleidman.
\newblock The maximal subgroups of the {C}hevalley groups {$G\sb 2(q)$} with
  {$q$} odd, the {R}ee groups {$\sp 2G\sb 2(q)$}, and their automorphism
  groups.
\newblock {\em J. Algebra}, 117(1):30--71, 1988.

\bibitem{Lun}
G.~Lunardon.
\newblock Normal spreads.
\newblock {\em Geom. Dedicata}, 75:245--261, 1999.

\bibitem{Lunardon06}
G.~Lunardon.
\newblock Blocking sets and semifields.
\newblock {\em J. Combin. Theory Ser. A}, 113(6):1172--1188, 2006.

\bibitem{Luyckx:2001fk}
D.~Luyckx and J.~A. Thas.
\newblock Flocks and locally {H}ermitian 1-systems of {$Q(6,q)$}.
\newblock In {\em Finite geometries}, volume~3 of {\em Dev. Math.}, pages
  257--275. Kluwer Acad. Publ., Dordrecht, 2001.

\bibitem{FGQ}
S.~E. Payne and J.~A. Thas.
\newblock {\em Finite generalized quadrangles}, volume 110 of {\em Research
  Notes in Mathematics}.
\newblock Pitman (Advanced Publishing Program), Boston, MA, 1984.

\bibitem{Segre64}
B.~Segre.
\newblock Teoria di {G}alois, fibrazioni proiettive e geometrie non
  desarguesiane.
\newblock {\em Ann. Mat. Pura Appl. (4)}, 64:1--76, 1964.

\bibitem{ThasVM96}
J.~A. Thas and H.~Van~Maldeghem.
\newblock Embedded thick finite generalized hexagons in projective space.
\newblock {\em J. London Math. Soc. (2)}, 54(3):566--580, 1996.

\bibitem{ThasVM98}
J.~A. Thas and H.~Van~Maldeghem.
\newblock Flat lax and weak lax embeddings of finite generalized hexagons.
\newblock {\em European J. Combin.}, 19(6):733--751, 1998.

\bibitem{ThasVM08}
J.~A. Thas and H.~Van~Maldeghem.
\newblock A characterization of the natural embedding of the split {C}ayley
  hexagon {$H(q)$} in {${\rm PG}(6,q)$} by intersection numbers.
\newblock {\em European J. Combin.}, 29(6):1502--1506, 2008.

\bibitem{Tits59}
J.~Tits.
\newblock Sur la trialit\'e et certains groupes qui s'en d\'eduisent.
\newblock {\em Inst. Hautes Etudes Sci. Publ. Math.}, 2:14--60, 1959.

\bibitem{HvM}
H.~{V}an {M}aldeghem.
\newblock {\em Generalized polygons}, volume~93 of {\em Monographs in
  Mathematics}.
\newblock Birkh\"auser Verlag, Basel, 1998.

\end{thebibliography}
\end{document}